\theoremstyle{plain}
\newtheorem{theorem}{Theorem}[section]
\newtheorem{lemma}[theorem]{Lemma}
\newtheorem{corollary}[theorem]{Corollary}
\newtheorem{proposition}[theorem]{Proposition}
\theoremstyle{definition}
\newtheorem{definition}[theorem]{Definition}
\theoremstyle{remark}
\newtheorem{remark}[theorem]{Remark}
\newtheorem{example}[theorem]{Example}
\DeclareSymbolFont{AMSb}{U}{msb}{m}{n}
\DeclareMathSymbol{\N}{\mathalpha}{AMSb}{"4E}
\DeclareMathSymbol{\R}{\mathalpha}{AMSb}{"52}
\DeclareMathSymbol{\Z}{\mathalpha}{AMSb}{"5A}
\DeclareMathSymbol{\D}{\mathalpha}{AMSb}{"44}
\DeclareMathSymbol{\s}{\mathalpha}{AMSb}{"53}
\DeclareMathOperator{\vol}{vol}
\DeclareMathOperator{\supp}{supp}
\DeclareMathOperator{\m}{m}
\DeclareMathOperator{\ric}{ric}
\DeclareMathOperator{\id}{id}
\title[Characterization of the null energy condition]{Characterization of the null energy condition via displacement convexity of entropy}
\author{Christian Ketterer}
\address{University of Freiburg}
\email{christian.ketterer@math.uni-freiburg.de}
\thanks{{\it 2010 Mathmatics Subject Classification.} Primary 83C75, 83C57, 49Q22. Keywords: null energy condition, null hypersurface, entropy convexity, singularity theorems.}
\begin{document}
\maketitle
\begin{abstract} We characterize the null energy condition for an $(n+1)$-dimensional, time-oriented Lorentzian manifold in terms of convexity of the  relative $(n-1)$-Renyi entropy along displacement interpolations on null hypersurfaces.  More generally, we also consider Lorentzian manifolds with a smooth weight function and introduce the Bakry-Emery $N$-null energy condition that we characterize in terms of  null displacement convexity of the  relative $N$-Renyi entropy.  As application we then revisit Hawking's area monotonicity theorem for a black hole horizon and the Penrose singularity theorem from the viewpoint of this characterization and in the context of weighted Lorentzian manifolds.
\end{abstract}
\tableofcontents
\section{Introduction}
Many classical theorems in general relativity rest on local geometric constraints for the underlying Lorentzian space-time. These local constraints often have the form of lower bounds for curvature quanitities like the Ricci tensor and via the Einstein equation they are interpreted as energy conditions. One such condition is the null energy condition that requires non-negativity of the Ricci tensor in direction of  null vectors. The null energy condition plays a crucial role in the Penrose Singularity Theorem about incompleteness of null geodesics \cite{penrosenull} which forshadowed the existence of black holes and in Hawking's Area Monotonicity Theorem  \cite{hawkingarea} which asserts that the area of cross-sections of a black hole horizon is non-decreasing towards the future provided the horizon is future null complete.

In this article we present a characterization of the null energy condition for a time-oriented Lorentzian manifold in terms of entropy convexity along the future-directed geodesic null flow on  null hypersurfaces. 

A null hypersurface  $\mathcal H$ in a Lorentzian manifold $(M^{n+1},g)$ is a submanifold of dimension $n$ such that the metric $g$  restricted to $\mathcal H$ degenerates. The entropy is the relative $(n-1)$-Renyi entropy 
$$S_{n-1}(\mu|\vol_{\mathcal H})= -\int \rho^{1-\frac{1}{n-1}} d\vol_{\mathcal H}$$
where $\mu$ is a probability measure on $M$, $\vol_{\mathcal H}$  is the degenerated volume  of the Lorentzian metric $g$ on $\mathcal H$ and $\rho$ is the density of $\mu$ in the Lebesgue decomposition w.r.t. $\vol_{\mathcal H}$. A null flow on $\mathcal H$ consists of null geodesics that foliate the hypersurface $\mathcal H$. We say that a probability measure $\pi$ on $M\times M$ is a null coupling if there exists a null hypersurface $\mathcal H\subset M$ such that $\pi$ is concentrated on 
$$
R_{\scriptscriptstyle\mathcal H} =\big\{ (x,y) \in \mathcal H^2:  \exists \mbox{ flow curve $\gamma_{x,y}$} \mbox{ s.t. } \gamma_{x,y}(s)=x,\ \gamma_{x,y}(t)=y \ \&\  x\leq y\big\}.
$$ Two probability measures $\mu_0, \mu_1\in \mathcal P(\mathcal H, \vol_{\mathcal H})$ are called acausal if they are supported on acausal submanifolds $S_0$ and $S_1$.  Here $\mathcal P(\mathcal H, \vol_{\mathcal H})$ is the set of $\vol_{\mathcal H}$-absolutely continuous probability measures concentrated on $\mathcal H$. We say  that two acausal probability measures  $\mu_0$ and $\mu_1$ are null connected if there exists a null coupling $\pi$ such that the marginal measures of $\pi$ are $\mu_0$ and $\mu_1$.  
For $x,y\in R_{\mathcal H}$ let $t\in [0,1]\mapsto \tilde \gamma_{x,y}(t)$ be the affine reparametrization of $\gamma_{x,y}$.    If $\mu_0$ and $\mu_1$ are null connected, we define $\mu_t= (e_t)_{\#}\pi$ for $t \in [0,1]$ where $e_t:(x,y)\mapsto  \tilde \gamma_{x,y}(t)$ is the evaluation map. We will call $(\mu_t)_{t\in [0,1]}$ null displacement interpolation. 
\subsection{Statement of main result} 
Our main result is the following theorem.
\begin{theorem}\label{mmain1}
Let $(M^{n+1},g)$ be a Lorentzian manifold. 
The null energy condition holds if and only if
 for all $\mu_0, \mu_1\in \mathcal P(M, \vol_{\mathcal H})$ that are acausal and null connected, one has
\begin{align}\label{convexi} S_{n-1}(\mu_t|\vol_{\mathcal H})\leq (1-t) S_{n-1}(\mu_0|\vol_{\mathcal H}) + t S_{n-1}(\mu_1|\vol_{\mathcal H}).\end{align}
\end{theorem}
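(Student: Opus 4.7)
The plan is to reduce the theorem to a one-dimensional convexity inequality along each generator of the null foliation, governed by the null Raychaudhuri equation. On a null hypersurface $\mathcal H\subset M$ the degenerate metric $g|_{\mathcal H}$ has a one-dimensional kernel spanned by a null field tangent to the flow generators, and the quotient $T\mathcal H/\ker(g|_{\mathcal H})$ carries a genuine positive-definite metric of rank $n-1$. Accordingly $\vol_{\mathcal H}$ disintegrates, along an affine parametrization of the generators, as the product of the transverse $(n-1)$-volume and the parameter measure. Let $J_t(x,y)$ denote the Jacobian of the evaluation map $e_t$ viewed as a map between transverse $(n-1)$-dimensional cross-sections through $x$ and $\tilde\gamma_{x,y}(t)$. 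Since $\mathcal H$ is generated by null geodesics, the associated congruence is hypersurface orthogonal (the twist vanishes) and Raychaudhuri's equation reads
\[
\theta_t' \;=\; -\tfrac{1}{n-1}\,\theta_t^2 \;-\; |\sigma_t|^2 \;-\; \mathrm{Ric}\bigl(\dot{\tilde\gamma}_{x,y}(t),\dot{\tilde\gamma}_{x,y}(t)\bigr), \qquad \theta_t := (\log J_t)'.
\]
A direct computation then shows that NEC is equivalent to $t\mapsto J_t^{1/(n-1)}$ being concave along every generator of every null hypersurface.

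For the forward direction, assume NEC and let $(\mu_t)_{t\in[0,1]}$ be a null displacement interpolation on $\mathcal H$ with null coupling $\pi$. Acausality of $\supp\mu_0$ and $\supp\mu_1$ ensures that each generator of $\mathcal H$ meets each of these supports in at most one point, so $e_t$ is injective on $\supp\pi$ for every $t\in(0,1)$. Disintegrating $\pi$ over its first marginal and applying the change-of-variables formula in the transverse directions yields the Monge--Amp\`ere identity
\[
\rho_t\bigl(e_t(x,y)\bigr)\,J_t(x,y) \;=\; \rho_0(x) \qquad \pi\text{-almost everywhere}.
\]
Consequently $S_{n-1}(\mu_t|\vol_{\mathcal H})= -\int J_t(x,y)^{1/(n-1)}\,\rho_0(x)^{-1/(n-1)}\,d\mu_0(x)$, and the concavity of $J_t^{1/(n-1)}$ furnished by NEC and Raychaudhuri delivers \eqref{convexi} by integration in $(x,y)$.

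For the converse I would argue by contrapositive: suppose $\mathrm{Ric}(v,v)<0$ at some $p$ for a null $v$. Extend $v$ to a null geodesic $\gamma$ and choose a small acausal $(n-1)$-dimensional spacelike cross-section $S_0$ through $p$, transverse to $v$ and shaped so that the null congruence it generates has vanishing shear at $p$; flow $S_0$ a short affine distance to obtain $S_1$, so that $S_0,S_1$ are acausal pieces in a null hypersurface $\mathcal H$. By continuity $|\sigma_0|^2+\mathrm{Ric}(\dot\gamma,\dot\gamma)<0$ on a neighborhood, hence $J_t^{1/(n-1)}$ is strictly convex at $t=0$ along every generator in a neighborhood of $\gamma$. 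Taking $\mu_0,\mu_1$ to be normalized $(n-1)$-volumes concentrated on shrinking neighborhoods in $S_0,S_1$, coupled by the canonical null pairing between them, the same change-of-variables computation produces a strict violation of \eqref{convexi} for $t$ near $0$.

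The main technical obstacles are the rigorous disintegration of $\vol_{\mathcal H}$ along the null foliation together with the identification of $J_t$ with the $(n-1)$-dimensional Jacobian relative to the disintegrated measures, the measurable selection needed to define $\tilde\gamma_{x,y}$ and $e_t$ on $\supp\pi$, and the justification that acausality of the marginals forces $e_t$ to be $\pi$-a.e.~injective on the relevant fibers so that the Monge--Amp\`ere identity holds. Once these structural ingredients are in place, the remainder is a pointwise integration of the Raychaudhuri inequality against $\pi$.
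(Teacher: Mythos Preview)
Your proposal is correct and follows essentially the same route as the paper: both reduce the entropy inequality to the concavity of $(\det DT_t)^{1/(n-1)}$ along generators via the Raychaudhuri/Riccati equation, then integrate against the coupling after a Monge--Amp\`ere change of variables; for the converse both construct a null hypersurface through $p$ with vanishing null second fundamental form at $p$ (the paper does this in normal coordinates by taking $\lambda=0$) and use continuity to force strict convexity. The only notable difference is packaging: the paper avoids your disintegration of $\vol_{\mathcal H}$ by first proving (Lemma~3.2) that any null coupling between acausal measures is induced by a $C^1$ transport map $T_t(x)=\exp_x(t\,r(x)K(x))$ between cross-sections, which makes the Jacobian identification and the Monge--Amp\`ere identity immediate and sidesteps the measurable-selection and injectivity issues you flag as obstacles.
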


We  also introduce a Bakry-Emery $N$-null energy condition for weighted Lorentzian manifolds. We say $(M,g,e^{-V})$ with $V\in C^{\infty}(M)$ satisfies the  Bakry-Emery $N$-null condition for $N\geq n-1$  if 
$$
(N'-n+1)\left(\ric(v,v)+ \nabla^2V(v,v)\right) \geq  \langle \nabla V, v\rangle^2
$$
for any null vector $v\in TM$ and any $N'>N$. We obtain the following.
\begin{theorem}\label{mmain2}
Let $(M^{n+1},g, e^{-V})$ be a weighted Lorentzian manifold for $V\in C^{\infty}(M)$. 
The Bakry-Emery $N$-null energy condition holds if and only if
 for all $\mu_0, \mu_1\in \mathcal P(M, \m_{\mathcal H})$ that are acausal and null connected, one has
$$S_{N'}(\mu_t|\m_{\mathcal H})\leq (1-t) S_{N'}(\mu_0|\m_{\mathcal H}) + t S_{N'}(\mu_1|\m_{\mathcal H}) \ \ \forall N'\geq N.$$
where $\m_{\mathcal H}= e^{-V} \vol_{\mathcal H}$. 
\end{theorem}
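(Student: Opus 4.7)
The plan is to reduce the two-sided equivalence to a pointwise ODE along the generators of the null foliation, via a weighted Raychaudhuri equation. The direction ``BE-NEC implies displacement convexity'' then follows by integrating this ODE along the null coupling $\pi$ and applying Jensen's inequality; the converse is extracted by localizing the convexity inequality at an arbitrary null vector.

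\emph{Setup.} Fix a null coupling $\pi$ with acausal, $\m_{\mathcal H}$-absolutely continuous marginals $\mu_0,\mu_1$, and parametrize the relevant portion of $\mathcal H$ by $(s,y)$, where $y$ ranges over an acausal cross-section $S_0$ and $s$ is affine parameter along the generators. The degenerate volume splits as $d\vol_{\mathcal H}=J(s,y)\,ds\otimes d\vol_{S_0}(y)$ for a smooth Jacobian $J$, and setting
\[
\tilde J_V(s,y)=\frac{e^{V(y)-V(T_s y)}\,J(s,y)}{J(0,y)}
\]
makes the flow $T_s$ pull $\m_{\mathcal H}$ back by $\tilde J_V$. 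A standard change of variable yields
\[
S_{N'}(\mu_t\,|\,\m_{\mathcal H})=-\int \tilde J_V(t,y)^{1/N'}\,\rho_0(y)^{1-1/N'}\,d\m_{\mathcal H}(y),
\]
so that displacement convexity of $S_{N'}$ reduces to concavity of $t\mapsto \tilde J_V(t,y)^{1/N'}$ along almost every generator, combined via Jensen in $y$.

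\emph{Forward direction.} Write $\theta=(\log J)'$ for the unweighted expansion and $\theta_V=(\log\tilde J_V)'=\theta-\langle \nabla V,\dot\gamma\rangle$. The classical Raychaudhuri equation on $\mathcal H$ reads
\[
\theta'=-\frac{\theta^{2}}{n-1}-|\sigma|^{2}-\ric(\dot\gamma,\dot\gamma),
\]
so $\theta_V'\leq -\theta^2/(n-1)-(\ric+\nabla^2V)(\dot\gamma,\dot\gamma)$. The elementary bound $a^2/\alpha+b^2/\beta\geq (a+b)^2/(\alpha+\beta)$, applied with $a=\theta$, $b=-\langle\nabla V,\dot\gamma\rangle$, $\alpha=n-1$, $\beta=N'-n+1$, combined with the BE-NEC assumption $(N'-n+1)(\ric+\nabla^{2}V)\geq\langle\nabla V,\dot\gamma\rangle^{2}$, yields
\[
\theta_V'\leq -\frac{\theta_V^{2}}{N'},
\]
which is exactly the ODE that makes $\tilde J_V^{1/N'}$ concave in $s$. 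Integration over the coupling gives convexity of $S_{N'}(\mu_t|\m_{\mathcal H})$ for every $N'\geq N$.

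\emph{Converse.} To recover the BE-NEC at an arbitrary $p\in M$ and null vector $v\in T_pM$, I would embed $v$ in a null geodesic $\gamma$, build a null foliation around $\gamma$ whose initial expansion $\theta(0)\in\R$ can be prescribed and whose shear vanishes at $p$, and test the assumed convexity with measures supported on shrinking cross-sections of radius $r$ near $\gamma(0)$ and $\gamma(\delta)$. In the limit $\delta,r\to 0$, inequality~\eqref{convexi} becomes $\theta_V'(0)\leq -\theta_V(0)^{2}/N'$ at $p$. Substituting the Raychaudhuri identity and then maximizing over the free parameter $\theta(0)\in\R$ exactly inverts the Cauchy--Schwarz step used above and produces $(N'-n+1)(\ric(v,v)+\nabla^{2}V(v,v))\geq\langle\nabla V,v\rangle^{2}$ for every $N'>N$.

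\emph{Main obstacle.} The ODE computation is short, and Theorem~\ref{mmain1} is the specialization $V\equiv 0$, $N=n-1$ of the same argument; the technical weight of the proof lies elsewhere. In the forward direction one has to justify the Jacobian change of variable for null displacement interpolations starting from general acausal absolutely continuous marginals, which requires smoothness of the flow map and absence of focal points on the support. In the converse one has to realize, near a given $(p,v)$, a null congruence with freely prescribed initial expansion and vanishing shear; this is where the $(n-1)$-dimensional screen bundle of a null hypersurface enters and dictates the coefficient $n-1$ in the BE-NEC.
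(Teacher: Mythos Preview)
Your proposal is correct and matches the paper's approach in both directions: the forward implication is the weighted Raychaudhuri inequality combined with the Peter--Paul estimate $a^2/\alpha+b^2/\beta\geq(a+b)^2/(\alpha+\beta)$, and the converse is the construction (in normal coordinates) of a null hypersurface through $(p,v)$ with umbilical null Weingarten map $\lambda\cdot\mathrm{id}$, so that the shear vanishes and $\theta(0)=(n-1)\lambda$ is free---the paper simply fixes the optimal value $\lambda=-\tfrac{1}{N'-n+1}\langle\nabla V(p),v\rangle$ where you write ``maximize over $\theta(0)$''. One slip to fix: $\vol_{\mathcal H}$ is the degenerate $(n-1)$-form on $\mathcal H$ (its restriction to each spacelike cross-section is the Riemannian area), not an $n$-form, so the decomposition $J(s,y)\,ds\otimes d\vol_{S_0}$ is not well-posed; drop the $ds$ and read your $\tilde J_V(s,y)$ directly as $e^{V(y)-V(T_sy)}\det DT_s(y)$, which is exactly the Monge--Amp\`ere factor of the paper's Lemma~\ref{lem:ma}.
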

Theorem \ref{mmain1} and Theorem \ref{mmain2}  are in the   spirit of similar results for the strong energy condition by R. McCann \cite{mccannlorentz} and for  the Einstein equation  by A. Mondino and S. Suhr \cite{mondinosuhr} for which these authors proved charactizations via entropy convexity on the space of probability measures equipped with a Wasserstein distance derived from the time separation function. Such  results are motivated by the desire to formulate  local energy conditions in a low regularity framework where the classical notion of  Ricci curvature breaks down.  In particular, this was the starting point for the development of a  {\it synthetic definition of  the strong energy condition} via entropy convexity \cite{camolorentz} in the framework of Lorentzian length spaces \cite{kusa} (see also \cite{braun, mccannsaemann} for further developments). The characterization of lower Ricci curvature bounds via entropy convexity on the Wasserstein space of probability measures was first established for Riemannian manifolds \cite{cms, sturmrenesse, ottovillani}.
Optimal transport of probability measures that are supported on submanifolds inside of a Riemannian manifold in relation to curvature bounds has been studied in \cite{kemo}. 

We use the characterization of Theorem \ref{mmain2} to define a synthetic $N$-null energy condition for a time-oriented Lorentzian manifold $(M,g)$ equipped with a continuous weight  of the form $e^{-V}$ (Definition \ref{def:syn}).
%
As  an application of our main results we  revisit  Hawking's Area Monotonicity Theorem and the Penrose Singularity Theorem
in the setting of such weighted, time-oriented Lorentzian manifolds that satisfy the synthetic $N$-null energy condition.   First we prove the following version of the monotonicity theorem. 
\begin{theorem}[Hawking Monotonicity] Let $(M^{n+1},g, e^{-V})$ be a  weighted space-time that satisfies the synthetic $N$-null energy condition for $N\geq n-1$ and let $\mathcal H\subset M$ be a null hypersurface.
Assume $\mathcal H$ is future null complete. Let $\Sigma_0, \Sigma_1\subset M$ two acausal spacelike hypersurfaces. Define $\Sigma_i\cap \mathcal H= S_i$, $i=0,1$, and assume $S_0\subset J^-(S_1)$.  Then 
$$\m_{\mathcal H}(S_0)\leq \m_{\mathcal H}(S_1).$$
\end{theorem}
 We also  introduce the notion of synthetically future converging for a codimension 2 submanifold $S$ in a weighted Lorentzian manifold $(M,g,e^{-V})$  (Definition \ref{def:trapped}) and we prove the following version of the Penrose Singularity Theorem.
\begin{theorem}
For $V\in C^0(M)$ let $(M,g,e^{-V})$ be a weighted, time-oriented Lorentzian manifold that is globally hyperbolic. Assume there exists a non-compact Cauchy hypersurface $\Sigma\subset M$, $M$ contains a synthetically future converging, compact, oriented codimension two submanifold $S$ and the synthetic $N$-null energy condition for $N\geq n-1$ holds. Then $M$ is future null incomplete.
\end{theorem}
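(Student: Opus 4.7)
My plan is to adapt the classical argument of Penrose to the synthetic setting, using the displacement convexity of Theorem \ref{mmain2} in place of the Raychaudhuri focusing equation. Assuming for contradiction that $M$ is future null geodesically complete, the aim is to contradict non-compactness of the Cauchy hypersurface $\Sigma$. Without loss of generality I may replace $M$ by the connected component containing $S$, so that $\Sigma$ is connected and non-compact.

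The main step is focusing. I expect Definition \ref{def:trapped} to ensure that the synthetic null expansion of $S$ is bounded above by some $-\theta_0<0$ in each of the two future-directed null normal directions, and I would prove: every future-directed null geodesic leaving $S$ orthogonally encounters a focal point within affine parameter at most $L\sim N/\theta_0$. To extract this from Theorem \ref{mmain2}, I would take $\mu_0$ to be a small $\vol_{\mathcal H}$-absolutely continuous probability measure supported on a patch of $S$ inside one of the two null hypersurfaces $\mathcal H^{\pm}$ generated by the null flow from $S$, and let $\mu_s$ be its push-forward along this null flow out to affine parameter $s$. The pair $(\mu_0,\mu_s)$ is acausal and null connected, so Theorem \ref{mmain2} yields that $t\mapsto\int\rho_t^{1-1/N'}\,d\m_{\mathcal H}$ is concave on $[0,1]$ for every $N'\geq N$. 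A disintegration along the null flow lines of $\mathcal H^{\pm}$ converts this global concavity into pointwise concavity of the $1/N'$-power of the weighted Jacobian $J_s$ of the null flow. Its initial slope is controlled by the synthetic expansion of $S$, hence is at most $-\theta_0/N'$, forcing $J_s^{1/N'}$ to vanish by affine parameter $\leq N'/\theta_0$; letting $N'\downarrow N$ produces the focal bound.

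Once focusing is established, the remainder is Penrose's topological scheme. The future horismos $E^+(S)=\partial J^+(S)$ is a non-empty achronal topological $n$-manifold without boundary by global hyperbolicity, and by the focusing bound together with future null completeness it coincides with the image of the compact set $S\times[0,L]\times\{+,-\}$ under the null exponential map. Hence $E^+(S)$ is compact. Choose a Cauchy temporal function whose level set is $\Sigma$, and project $E^+(S)$ into $\Sigma$ along its gradient flow; by the Cauchy property and achronality of $E^+(S)$, the projection is a continuous injection. Its image is compact, and open in $\Sigma$ by invariance of domain between $n$-manifolds. Thus the image is a non-empty compact clopen subset of the connected manifold $\Sigma$; it must therefore equal $\Sigma$, forcing $\Sigma$ to be compact and contradicting the hypothesis.

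The principal obstacle is the focusing step. Theorem \ref{mmain2} is a global convexity statement about absolutely continuous measures on $\mathcal H$, whereas focusing is an infinitesimal one-dimensional statement along an individual null geodesic. Passing from the former to the latter requires a careful disintegration of the null displacement interpolation along the null fibres of $\mathcal H^{\pm}$ and an approximation argument in which $\mu_0$ concentrates near a single point of $S$; one then identifies the resulting concavity of the $1/N'$-power of the one-dimensional density with the desired Jacobian concavity. A secondary technical point is matching the initial slope appearing in this Jacobian inequality to the precise formulation of ``synthetically future converging'' adopted in Definition \ref{def:trapped}, which may itself be phrased directly in terms of such an entropy slope and thereby render the matching semi-automatic.
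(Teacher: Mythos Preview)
Your proposal is correct and matches the paper's approach: the ``principal obstacle'' you identify is precisely the paper's Localisation Proposition, which converts the global entropy convexity into the pointwise concavity of $t\mapsto\rho_t(\tilde\gamma_t)^{-1/N'}$ along $\Pi$-a.e.\ null generator, and your ``secondary technical point'' is resolved exactly as you anticipate, since Definition~\ref{def:trapped} is phrased directly as a logarithmic volume-slope bound (so the matching is indeed semi-automatic, yielding the focal bound $t_p<1/\epsilon(p)$ as in the Remark after Corollary~\ref{cor:penrose}). One small correction: since $V\in C^0(M)$ here, you should invoke the synthetic $N$-null energy condition (Definition~\ref{def:syn}) directly rather than Theorem~\ref{mmain2}, which requires $V\in C^\infty(M)$.
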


The rest of this article is structured as follows. 

In Section 2 we will briefly recall important notions about  smooth, time-orientied Lorentzian manifolds and null hypersurfaces. We will define the Ricci and the Bakry-Emery Ricci tensor and the corresponding null energy conditions. We also introduce the degenerated volume and the corresponding relative Renyi entropy. 

In Section 3 we  define the notion of null coupling, null connectedness and null displacement interpolation. We prove that null displacement interpolations are induced by maps. Then, we  prove that the null energy condition is necessary and sufficient for null displacement convexity for the Renyi entropy along null hyupersurfaces. 

In Section 4 we present consequences that we can derive directly from the null displacement convexity. This includes the Hawking area monotonicity and a corollary that leads to the Penrose singularity theorem. 
\subsubsection*{Acknowledgments}
The main part of this work was done when the author stayed at  the Fields Institute in Toronto as  Longterm Visitor during the Thematic Program on {\it Nonsmooth Riemannian and Lorentzian Geometry}. CK wants to thank the Fields Institute for providing  an excellent research environment. In particular many thanks go to Robert McCann, Clemens Saemann and Mathias Braun for stimulating discussions about topics in general relativity.  This work was completed during a research visit of the author
at UNAM Oaxaca, Mexico.
\section{Preliminaries}
\subsection{Lorentzian manifolds}
A space-time is a smooth, connected, time-oriented Lorentzian manifold  $(M,g)$ with $\dim_M=n+1$. The signature of $M$ is $(-, +, \dots, +)$.
We also write $g= \langle \cdot, \cdot \rangle$. A vector $v\in T_xM$ is timelike, spacelike, causal or null if $\langle v, v\rangle$ is negative, positive, non-positive or $0$, respectively.  A $C^1$ curve $\gamma: I \rightarrow M$ is then timelike, spacelike, causal or null if $\gamma'(t)$ is so for every $t\in I$. For $x,y\in M$ we write $x\leq y$ if there exists a future directed causal curve from $x$ to $y$ and  $x\ll y$ if there exists a future directed timelike curve from $x$ to $y$. For $A\subset M$ let  $J^{+}(A):= \{ y\in M:  \exists x\in A \mbox{ s.t. } x\leq y\}$ be the causal future of $A$ and similarly, let $J^-(A):= \{y\in M: \exists x\in A \mbox{ s.t. } y\leq x\}$ be the causal past.   $A$ is called achronal (acausal) if no timelike (causal) curve intersects $A$ twice. An achronal hypersurface $\Sigma$ is a Cauchy hypersurface if every inextendible timelike
curve intersects $\Sigma$ exactly once. A space-time $(M,g)$ which possesses a Cauchy hypersurface,  is called globally hyperbolic.
\subsection{Bakry-Emery Ricci curvature}
\begin{definition}
The Ricci tensor of $(M,g)$ is  $(0,2)$-tensor defined as 
$$
\ric(v,w)=
\mbox{trace}\left[ z\mapsto R(v,z)w\right] = \sum_{i=0}^n \langle R(v, e_i)w, e_i \rangle
$$
where $v,w\in T_xM$ for $x\in M$ and $e_0, e_1, \dots, e_n$ an orthonormal basis of $T_xM$ w.r.t. $g_x$.
If $|\langle v, v\rangle|=1$, one can pick an orthonormal basis $e_0, \dots, e_n$ such that $v=e_j$ for some $j\in \{0, \dots, n\}$ and hence 
$$\ric(v,v)= \sum_{j\neq i=0}^n \langle R(v, e_i)v, e_i\rangle.$$ 
If $v$ is null, there exist orthonormal vectors $e'_2, \dots, e'_n\in v^{\perp}$ such that 
$$\ric(v,v)= \sum_{i=2}^n\langle R(v,e'_i)w, e'_i\rangle$$
\cite[Proof of Lemma 8.9]{oneillsemi}.

Given $V\in C^\infty(M)$ the Bakry-Emery Ricci tensor is 
$$
\ric+ \nabla^2 V=: \ric^V.
$$
Given $N\in [N+1, \infty)$  and $K\in \R$ the weighted space-time $(M,g, e^{-V})$ satisfies the  timelike Bakry-Emery $N$-Ricci curvature condition if 
$$
(N'-n+1)(\ric^V(v,v)- K\langle v, v\rangle)\geq \langle \nabla V, v \rangle^2
$$
for any timelike vector $v\in TM \mbox{ and } \forall N'>N.$
In particular, if $N=n+1$, it follows that $V$ is constant along any timelike curve.  A characterization of the timelike Bakry-Emery $N$-Ricci curvature condition in term of entropy displacement convexity was given in \cite{mccannlorentz, mondinosuhr}.
\end{definition}
\begin{definition}\label{def:benull}
Let $N\in [n-1, \infty)$. We say $(M,g,e^{-V})$ satisfies the  Bakry-Emery $N$-null condition  if 
$$
(N'-n+1)\ric^V(v,v) \geq  \langle \nabla V, v\rangle^2
$$
for any null vector $v\in TM$ and any $N'>N$.

If $N=n-1$, it follows that $V$ is constant along any lightlike curve. Hence $\langle \nabla V, v\rangle=\nabla V^2(v,v)=0$ for every null vector $v$ and the condition reduces to  the  null energy condition
$$\ric(v,v)\geq 0 \ \mbox{ for any null vector } v\in TM.$$
\end{definition}
\subsection{Null hypersurfaces}\label{subsec:null}
A null hypersurface $\mathcal H$ in $M$ is an embedded $C^2$ submanifold of codimension one such that the pull-back  metric of $g$ on $\mathcal H$ is degenerated.  There exists a non-vanishing $C^1$ future-directed, null vector field $K\in \Gamma(T\mathcal H)$ such that $K_x^{\perp}= T_x\mathcal H$ for all $x\in \mathcal H$. 
The flow curves of $K$ in $\mathcal H$ are null geodesics and one says that $\mathcal H$ is null geodesically generated (ruled).  A flow curve admits an affine reparametrization. More precisely, let $\gamma_x: (\alpha, \beta)\rightarrow \mathcal H$ be the flow curve of $K$ with $\gamma_x(0)=x$. There exists a reparametrization $\varphi: (a,b)\rightarrow (\alpha, \beta)$ of $\gamma_x$ such that $ \gamma_x\circ \varphi(t)= \exp_x(tK(x))$. The vectorfield $K$ is unique up to a positive factor.  For further details about the geometry on null hypersurface we refer to \cite[Appendix A]{chdegaho}.
\begin{example}\label{ex:nullcones}
Let $S$ be a codimension 2 smooth hypersurface that is spacelike. Assume also that $S$ is an oriented manifold. At any point $x\in S$ we can pick two future-directed orthogonal null vectors $L_x$ and $\underline L_x$. Assume $L_x$ projects to the exterior of $S$. We will call $L_x$ an outer null normal to $S$ at $x$ and $\underline L_x$ an inner null normal to $S$ at $x$. We choose  the  map $x\in S\mapsto L_x$ to be smooth. For every $x\in S$ there exists a null geodesic $\gamma_x(t)= \exp_x(tL_x)$ with $\gamma_x(0)$ and $\gamma_x'(0)=L_x$ and we consider the set $\mathcal C= \bigcup_{x\in S}\mbox{Im} \gamma_x$ formed by these geodesics. $\mathcal C$ is also called a null geodesic congruence.  By replacing $L$ with $\underline L$ we also define $\underline {\mathcal C}$.  A neighborhood $\mathcal H$ of $S$  in $\mathcal C$ is  a smooth submanifold of codimension two and  the vector field $L$ extends to a smooth vector field  $K$ on $\mathcal C$ via $K(\gamma_x(t))= \gamma_x'(t)$.  Then $\mathcal H$ is a null hypersurface. Similarly there exists a corresponding null hypersurface $\underline{\mathcal H}$ for $\underline{\mathcal C}$.
\end{example}

\subsubsection{Volume}\label{sec:vol}
Let  $\mathcal H\subset M$ be a null hypersurface.
The Lorentzian metric $g$ induces a $(n-1)$-form $\vol_{\mathcal H}$ on $\mathcal H$. 
If $S\subset \mathcal H$ is spacelike, codimension $2$ $C^2$ submanifold, then $\vol_{\mathcal H}(A)= \int_A d\vol_{\mathcal H}$ whenever $A\subset S$  is measurable.  If $V\in C^{\infty}(M)$, we set $\m_\mathcal H= e^{-V} \vol_{\mathcal H}$. 

A probability measure $\mu\in \mathcal P(M)$ is called $\m_{\mathcal H}$-absolutely continous if $\mu= \rho d\m_{\mathcal H}$ for a measurable function $\rho: M\rightarrow [0,\infty)$. We write $\mathcal P(\mathcal H ,\m_{\mathcal H})$ for the  set of $\m_{\mathcal H}$-absolutely continuous probability measures.
\subsubsection{Relative entropy}
The relative $N$-Renyi entropy w.r.t. $\m_{\mathcal H}$ of a probability $\mu\in \mathcal P(M)$ is defined as 
\begin{align*}
\mu= \rho d\m_{\mathcal H} + \mu^s\mapsto S_N(\mu|\m_{\mathcal H})= - \int \rho^{1-\frac{1}{N}} d\m_{\mathcal H}
\end{align*}
where $\rho d\m_{\mathcal H} + \mu^s$ is the Lebesgue decomposition of $\mu$.
Basic properties of $S_N$ are $S_N(\mu|\m_{\mathcal H})\leq 0$ with $"="$ if $\mu\perp \m_{\mathcal H}$ and $S_N(\mu|\mu_{S})\geq - \m_{\mathcal H}(\supp \mu)^{\frac{1}{N}}$. 
\section{Proof of the main results}
\subsection{Null couplings}
\begin{definition} Let $(M,g)$ be a space-time and $\mathcal H\subset M$ a null hypersurface with tangent vector field $K$ that generates a geodesic null flow.
We define a transport relation $R_{\mathcal H}$ on $\mathcal H$ by
$$
R_{\scriptscriptstyle\mathcal H} =\big\{ (x,y) \in \mathcal H^2:  \exists \mbox{ flow curve $\gamma$ of } K \mbox{ s.t. } \gamma(s)=x,\ \gamma(t)=y \ \&\  x\leq y\big\}.
$$
Let $x,y \in R_{\scriptscriptstyle\mathcal H}$, $\gamma|_{[s,t]}|=: \gamma_{x,y}$ the flow curve that connects $x$ and $y$, and let  $\tilde \gamma_{x,y}:[0,1]\rightarrow \mathcal H$ its affine reparametrization.  Let $\mathcal G(\mathcal H):= \mathcal G^{[0,1]}(\mathcal H)$ be the collection of all affine parametrized flow curves  $\tilde \gamma:[0,1]\rightarrow \mathcal H$, and for $\tilde \gamma^0\neq \tilde \gamma^1\in \mathcal G$ we define 
$$d_{\infty}(\tilde \gamma^0, \tilde \gamma^1)= \sup_{t\in [0,1]} d_g(\tilde \gamma^0(t), \tilde \gamma^1(t)). $$
This induces a topology on $\mathcal G(\mathcal H)$. 

A probability measure $\pi\in \mathcal P(M^2)$ is called a {\it null coupling} if there exists a null hypersurface $\mathcal H$ such that $\supp \pi \subset R_{\scriptscriptstyle \mathcal H}$.  By measurable selection we can define a measurable map $\Upsilon: (x,y)\mapsto \mathcal G(\mathcal H)$  $\pi$-almost everywhere such that $\Upsilon(x,y)=\tilde \gamma$ if and only if $\tilde \gamma(0)=x$ and $\tilde \gamma(1)=y$.  We call the pushforward $\Upsilon_{\#} \pi = \Pi$ dynamical (null) coupling.  If $e_t: \mathcal G(\mathcal H)\rightarrow \mathcal H$, then we call $\mu_t=(e_t)_{\#} \Pi, t\in [0,1],$ {\it null displacement interpolation}. 

Let  $(\mu_t)_{t\in [0,1]}$ be a null displacement interpolation. For $\mu_t$-almost every $x\in \mathcal H$ there exists a unique $\tilde \gamma^t_{x}\in \mathcal G(\mathcal H)$ such that $\tilde \gamma^t_{x}(t)=x$. Then we also have the $\mu_t$-almost everywhere defined  map $\Lambda_t: x\mapsto \tilde \gamma^t_{x}$.  One can check that  $(\Lambda_t)_{\#}\mu_t= \Pi$.

We call a probability measures $\mu$ {\it acausal} if there is a  space-like acausal   $C^2$ submanifold $\Sigma$ such that $\mu$ is concentrated in $\Sigma$.  

We call two acausal probability measures $\mu_0$ and $\mu_1$ {\it null connected} if there exists a coupling $\pi$ between $\mu_0$ and $\mu_1$ that is null. In particular, it holds $\supp \mu_i\subset \mathcal H\cap \Sigma_i$, $i=0,1$, for a null hypersurface $\mathcal H$ and two spacelike, acausal submanifolds $\Sigma_0$ and $\Sigma_1$.
\end{definition}

\begin{lemma}\label{lem:optimalmap} A null coupling $\pi$ between acausal probability measures $\mu_0$ and $\mu_1$ as above is induced by a $C^1$ map $T: U\rightarrow \mathcal H$ where
$U\subset S_0$ open such that $\supp\mu_0\subset U$. More precisely, there exists $r\in C^1(U)$ such that $T(x)= \exp_x(r(x)K(x))$ and $(\id_{{\supp}(\mu_0)}, T)_{\#}\mu_0$.
We call $T$ transport map.
\end{lemma}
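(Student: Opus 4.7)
The plan is to construct $r$ pointwise on $\supp\mu_0$ using the null generator structure of $\mathcal H$, and then upgrade to $C^1$ regularity on an open neighborhood via the implicit function theorem (IFT). First, since $\pi$ is a null coupling concentrated on $R_{\mathcal H}$, every $(x,y) \in \supp\pi$ lies on a common affinely parametrized null generator of $\mathcal H$ with $x \leq y$ and $y \in S_1$. Because $S_1$ is spacelike and acausal, the causal curve $t \mapsto \exp_x(tK(x))$ can meet $S_1$ at most once, so there is a unique $r(x) \geq 0$ with $\exp_x(r(x)K(x)) \in S_1$. This defines $r$ unambiguously on the projection of $\supp\pi$ to the first factor, which exhausts $\mu_0$-a.e.\ point of $\supp\mu_0$.

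For the regularity, I would apply the IFT to the $C^1$ map
\[
F : S_0 \times \R \supset \Omega \to \mathcal H, \qquad F(x,s) = \exp_x(sK(x)),
\]
well-defined into $\mathcal H$ since affine flow curves of $K$ stay inside $\mathcal H$. Fixing $x_0 \in \supp\mu_0$ and setting $y_0 := \exp_{x_0}(r(x_0)K(x_0)) \in S_1$, one computes $\partial_s F(x_0,r(x_0)) = K(y_0)$. Because $K(y_0)$ is null and non-zero while every non-zero vector in $T_{y_0}S_1$ is spacelike, $K(y_0) \notin T_{y_0}S_1$; combined with $\dim T_{y_0}\mathcal H - \dim T_{y_0}S_1 = 1$, this yields the transversal splitting $T_{y_0}\mathcal H = T_{y_0}S_1 \oplus \R K(y_0)$. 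Choosing a local $C^1$ defining function $\phi$ for $S_1$ near $y_0$, the scalar equation $\phi(F(x,s))=0$ will have non-vanishing $s$-derivative at $(x_0, r(x_0))$, and the IFT delivers an open neighborhood $V_{x_0} \subset S_0$ of $x_0$ together with a unique $C^1$ solution $r_{x_0} : V_{x_0} \to \R$.

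Uniqueness of the generator--$S_1$ intersection forces these local solutions to agree on overlaps, so they assemble into a single $C^1$ function $r$ defined on an open set $U \supset \supp\mu_0$, and the desired transport map is $T(x) := \exp_x(r(x)K(x))$. Finally, since $\pi$-a.e.\ $(x,y)$ satisfies $y = T(x)$ and $\mu_0$ is the first marginal of $\pi$, one deduces $\pi = (\id_{\supp\mu_0}, T)_\#\mu_0$. The hardest part will be the transversality verification coupled with the passage from the a.e.\ pointwise definition of $r$ to a $C^1$ function on an \emph{open} neighborhood of $\supp\mu_0$; this relies on both the openness granted by the IFT and the uniqueness-based patching of the local branches.
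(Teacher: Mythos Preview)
Your argument is correct and follows the same overall architecture as the paper's proof: uniqueness of the intersection from acausality of $\Sigma_1$, local $C^1$ regularity via the implicit function theorem, and patching by uniqueness. The only substantive difference is how the IFT hypothesis is verified. The paper takes $\Phi:S_0\times(0,\infty)\to M$, invokes an external result (absence of focal points along generators that remain in $\mathcal H$, \cite[Lemma~4.15]{chdegaho}) to conclude that $D\Phi_{(x,t)}$ is injective, and then uses that $\Phi$ is a local diffeomorphism onto an open piece of $\mathcal H$ before applying the IFT. You instead verify directly that the $s$-direction is transversal to $S_1$ inside $\mathcal H$, using only that the generator velocity is null while $T_{y_0}S_1$ is spacelike; this is more elementary and suffices for the lemma as stated. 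The paper's stronger conclusion (that $\Phi$, equivalently each $T_t$, is a local diffeomorphism) is, however, used later in the article, so their detour is not wasted.

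One small inaccuracy: $\partial_s F(x_0,r(x_0))$ is the velocity of the affinely parametrized null geodesic $s\mapsto\exp_{x_0}(sK(x_0))$ at $y_0$, which is the parallel transport of $K(x_0)$ and hence a \emph{positive multiple} of $K(y_0)$ rather than $K(y_0)$ itself (recall $K$ is only defined up to a positive factor). This does not affect your transversality argument, since all you need is that the vector is null and nonzero.
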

\begin{proof} {\bf 1.} Since $\Sigma_1$ is acausal, 
for every $x\in \Sigma_0$ there exists a most one $y\in \Sigma_1$ such that $(x,y)\in R_{\scriptscriptstyle \mathcal H}$. Moreover, by existence of a null coupling $\pi$  for $\mu_0$-almost every $x$ there exists at least one such $y\in \Sigma_1$.  Hence $y$ is the unique intersection of $\gamma_x$ with $\Sigma_1$. So we set $\mu_0$-almost everywhere $x\mapsto y=: \tilde T(x)$, and for $\pi$-almost every $(x,y)$ it holds $\tilde T(x)=y$. 
\smallskip\\
{\bf 2.}
Let $\Phi:   S_0\times (0, \infty)\rightarrow M$  be the map given by $\Phi(x,t)= \exp_x(tK(x))$. 
For every $(x,t)\in S_0\times (0,\infty)$ with $\Phi(x,t)\in \mathcal H$ there is exactly one inextendible flow curve $\gamma_y$ of $K$ in $\mathcal H$ passing through $\Phi(x,t)$.  Hence $\Phi(x,s) = \gamma_y\circ \varphi_x(s)$ for a regular  reparametrization $\varphi_x: (0, \omega_x)\rightarrow \R$. In particular $\Phi(x,s)\in \mathcal H$ for $s\in (0, \omega_x)$.
Hence, by Lemma 4.15 in \cite{chdegaho}  there are no focal points along $t\in (0, \omega_x)\mapsto \Phi(x,t)$.
Consequently $D\Phi_{(x,t)}: T_{x,t}(S_0\times (0,\infty)) \rightarrow T_{\Phi(x,t)} M$ is injective for $t\in (0, \omega_x)$.
\smallskip\\
{\bf 3.}
If $y\in S_1$ with $\Phi(x,t)=y$ for $x\in S_0$, we can find an open set $O\times (t-\delta, t+\delta)\ni (x,t)$ in $S_0\times (0,\infty)$ such that $\Phi|_{O\times (t-\delta, t+\delta)}$ is a diffeomorphism.   Since fore every $x\in O$ the curve $\Phi(x,t)$ meets $S_1$ at most once and since $\Phi(O\times (t-\delta, t+\delta))\cap S_1$ is open in $S_1$, we can choose $O$ small enough such that for every $z\in O$ there exists a unique $s\in (t-\delta, t+\delta)$ such that  $\Phi(z,s)\in S_1$. Then the implicite function theorem implies existence of a $C^1$ function $g:  O\rightarrow (t-\delta, t+\delta)$ such that 
$$\exp_x(g(x)K(x))=\Phi(x, g(x))\in S_1\ \ \forall x\in O\subset  S_1.$$
Since $\tilde T(x)$ is already the unique intersection point of the flow curve $\gamma_x$ with $S_1$, we have $\tilde T(x)= \exp(g(x)K(x))$ for all $x\in \supp\mu_0\cap O$. Hence, the map $\tilde T$ is the restriction of the $C^1$ map $T: x\in U\mapsto \exp_x(r(x)K(x))$ with $r: U\rightarrow (0,\infty)$ for $U\subset S_0$ open. 
\end{proof} 
\begin{example}\label{example} Consider a space-time $(M,g)$ and let $\Sigma_0, \Sigma_1\subset M$ be spacelike, acausal submanifolds and let $S_i= \mathcal H \cap \Sigma_i$ such that $S_0\subset J^-(S_1)$. Since $\Sigma_1$ is acausal, for every $x\in S_0$ there exists at most one $y\in S_1$ such that $J^-(y)\ni x$. Then, exactly as in the proof of the previous lemma we can construct a  $C^1$ map $T: S_0 \rightarrow S_1$ of the form $T(x)= \exp_x(\tilde K(x))$. In particular, if $\vol_{\mathcal H}(S_0)<\infty$ (for the definition of $\vol_{\mathcal H}$ see Section \ref{sec:vol} below), we can define $\mu_0= \frac{1}{\vol_{\mathcal H}(S_0)}\vol_{\mathcal H}|_{S_0}$ and $\mu_1= (T)_{\#} \mu_0$. Then $\mu_0$ and $\mu_1$ are null connected. 
\end{example}
\begin{remark}  
The proposition implies that $\exp_x(t\tilde K(x))= \tilde \gamma_{x, T(x)}(t)=:\tilde \gamma_x(t)$ where $\tilde K(x)= r(x) K(x)$ and $\tilde \gamma_{x,T(x)}= \Upsilon(x,T(x))$. 
We set $T_t: U\rightarrow \mathcal H$ via $T_t(x):= \exp_x(t \tilde K(x))=\tilde \gamma_x(t)$.  In particular, we have for the induced null displacement interpolation that $\mu_t= (T_t)_{\#}\mu_0$. Let $\omega_x>1$ such that $\tilde \gamma_x([0,\omega_x)) \subset \mathcal H$ and let $b\in [0, \omega_x)$. 
For  $x\in U$ and  $v\in T_xU$ the vectorfield $t\in [0,b]\mapsto (DT_t)_xv$ satisfies the Jacobi equation with $DT_0v=v$ and $\frac{d}{dt}DT_{t,x}v= \nabla_v \tilde K(x)$.  Since, by Lemma 4.15 in \cite{chdegaho}, there are no focal points along $\tilde \gamma_x(t)$ unless there is an endpoint of $\gamma_x$ in $\mathcal H$,  $DT_{t,x}v$ is not lightlike for any $v\in T_xS_0$. (Since $\mathcal H^2$ is $C^2$ submanifold and $K$ a non-vanishing $C^1$ vectorfield on $\mathcal H$, there are no endpoints in $\mathcal H$.)  Hence $T_t$ is a $C^1$ diffeomorphism with $T_t(S_0)=S_t$ spacelike and acausal.  In particular $g|_{S_t}$ is non-degenerated.

\end{remark}
\begin{remark}
This setup is also meaningful in a more general context. 
A general null hypersurface $\mathcal H$ is a topological codimension $1$ submanifold that is the union of null geodesics. The concept of transport relation and null coupling are defined analogously. A measure $\mu$ is acausal if it is supported on a $n-1$-rectifiable subset that is acausal. The definition of $r$ and the map $T$ is the same and by \cite{chdegaho} $T$ and $r$ are Lipschitz continuous.  
\end{remark}
\subsection{The null energy condition implies null displacement convexity}

Let $\mu_0, \mu_1\in \mathcal P(M, \m_{\mathcal H})$ be  acausal and null connected and let $(\mu_t)_{t\in [0,1]}$ be the induced null displacement interpolation. Each $\mu_t$ is supported on the set $T_t(U)\subset S_t$ where $S_t$ is the image of $S_0$ under $T_t: U\rightarrow \mathcal H$.
\begin{lemma} Let $V\in C^0(M)$ and $\m_{\mathcal H}=e^{-V}\vol_{\mathcal H}$. Then
$\mu_t\in \mathcal P(M, \m_{\mathcal H})$ $\forall t\in (0,1)$.  
We denote the densities of $\mu_t$ w.r.t. $\m_{\mathcal H}$ with $\rho_t$. 
\end{lemma}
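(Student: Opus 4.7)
My plan is to exploit the structure established in the preceding remark: the map $T_t\colon U \to \mathcal H$ is a $C^1$ diffeomorphism onto its image $S_t := T_t(U) \subset S_t$, and the image is a spacelike acausal codimension-$2$ submanifold of $M$ on which $g$ restricts to a nondegenerate (positive definite) metric. Consequently $\vol_{\mathcal H}|_{S_t}$ is a smooth, nowhere-vanishing $(n-1)$-form on $S_t$, and the restriction of $\m_{\mathcal H}$ to $S_t$ is just $e^{-V}$ times this volume form. Since $\mu_0 \in \mathcal P(M, \m_{\mathcal H})$ and $\supp \mu_0 \subset U \subset S_0$, we may write $\mu_0 = \rho_0 \, d\vol_{\mathcal H}|_{S_0} \cdot e^{-V}$ with a nonnegative measurable density $\rho_0$ on $S_0$.

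First I would define the Jacobian $J_t(x) > 0$ of $T_t\colon S_0 \to S_t$ with respect to the intrinsic volume forms $\vol_{\mathcal H}|_{S_0}$ and $\vol_{\mathcal H}|_{S_t}$; positivity is guaranteed because $DT_{t,x}\colon T_x S_0 \to T_{T_t(x)} S_t$ is the linear isomorphism supplied by the Jacobi-field analysis (no focal points) of the remark, and both tangent spaces are spacelike. Then, by the classical change of variables formula for $C^1$ diffeomorphisms between smooth $(n-1)$-dimensional spacelike submanifolds, applied to any bounded Borel $\phi\colon M \to \R$:
\begin{equation*}
\int_M \phi \, d\mu_t = \int_U \phi(T_t(x)) \, \rho_0(x) \, e^{-V(x)} \, d\vol_{\mathcal H}(x) = \int_{S_t} \phi(y) \, \frac{\rho_0(T_t^{-1}(y))}{J_t(T_t^{-1}(y))} \, e^{-V(T_t^{-1}(y))} \, d\vol_{\mathcal H}(y).
\end{equation*}
Rewriting $d\vol_{\mathcal H}(y) = e^{V(y)} \, d\m_{\mathcal H}(y)$ on $S_t$ yields $\mu_t \ll \m_{\mathcal H}$ with density
\begin{equation*}
\rho_t(y) = \frac{\rho_0(T_t^{-1}(y))}{J_t(T_t^{-1}(y))} \, e^{V(y) - V(T_t^{-1}(y))}, \qquad y \in S_t,
\end{equation*}
and $\rho_t \equiv 0$ elsewhere on $\mathcal H$.

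The only delicate point is justifying the change of variables for the pushforward in the low-regularity setting: $\mathcal H$ is only $C^2$, $K$ only $C^1$, hence $T_t$ only $C^1$ and $J_t$ only continuous. This is not a real obstacle though, since the codomain $S_t$ has been shown to be a $C^1$ embedded spacelike submanifold on which $\vol_{\mathcal H}$ agrees with the intrinsic $(n-1)$-dimensional Riemannian volume of $g|_{S_t}$, so the standard $C^1$ area/coarea formula (equivalently Lemma~\ref{lem:optimalmap} combined with the usual diffeomorphism change of variables) applies verbatim. If $V$ is only $C^0$ as in the synthetic setting, the same formula remains valid since the factor $e^{V(y)-V(T_t^{-1}(y))}$ is merely a bounded continuous multiplier and does not affect absolute continuity.
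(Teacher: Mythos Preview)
Your argument is correct, but it takes a different route from the paper's. The paper gives a purely measure-theoretic proof of absolute continuity: for a set $N\subset S_t$ with $\m_{\mathcal H}(N)=0$, it observes that $T_t^{-1}(N)$ is $\m_{\mathcal H}$-null (since $T_t^{-1}$ is $C^1$), and then uses the coupling $\pi=(e_0,e_t)_{\#}\Pi$ to bound
\[
\mu_t(N)=\pi(S_0,N)=\pi(T_t^{-1}(N),N)\leq \mu_0(T_t^{-1}(N))=0.
\]
No density is computed at this stage; the explicit formula $\rho_t\circ T_t\cdot \det DT_t\cdot e^{-V\circ T_t}=\rho_0\, e^{-V}$ is deferred to the subsequent Lemma~\ref{lem:ma}.

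You instead go straight to the change-of-variables formula for the $C^1$ diffeomorphism $T_t\colon S_0\to S_t$ between spacelike $(n-1)$-submanifolds and produce the density $\rho_t$ explicitly. This is more constructive and effectively merges the present lemma with Lemma~\ref{lem:ma} into a single step. The paper's argument has the advantage of being slightly more robust (it only needs that $T_t^{-1}$ maps null sets to null sets and that the coupling is supported on the graph of $T_t$), while your approach has the advantage of yielding the Monge--Amp\`ere relation immediately, at the cost of invoking the area formula one lemma earlier.
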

\begin{proof}
Let $N\subset S_t$ with $\m_{S_t}(N)=0$.  $T_t$ is a $C^1$ diffeomorphism and hence $T_t^{-1}$ is a $C^1$. It follows $(T_t)^{-1}(N)$ has $0$ measure w.r.t. $\vol_{\mathcal H}$ and therefore w.r.t. $\m_{\mathcal H}$. Let $\Pi$ be the dynamical null coupling and $\pi= (e_0, e_t)_{\#}\Pi$. Then
$$\mu_t(N)= \pi(S_0, N)= \pi((T_t)^{-1}(N), N)\leq \pi((T_t)^{-1}(N), S_t)= \mu_0((T_t)^{-1}(N)).$$
Hence $\mu_t(N)=0$ and $\mu_t$ is $\m_{\mathcal H}$ absolutely continuous. 
\end{proof}
\begin{lemma}\label{lem:ma}  Let $V\in C^0(M)$ and $\m_{\mathcal H}=e^{-V}\vol_{\mathcal H}$.
Let $\mu_t = \rho_t \m_{\mathcal H}$. Then 
$$e^{-V\circ T_t}\rho_t(T_t(x)) \det DT_{t}(x)= \rho_0(x)e^{-V(x)}.$$
\end{lemma}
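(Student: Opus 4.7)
The plan is to derive this Monge-Ampère-type identity from the pushforward description $\mu_t = (T_t)_{\#} \mu_0$ (stated in the remark preceding this lemma) combined with the classical Riemannian change of variables formula applied on the spacelike cross-sections $S_0$ and $S_t := T_t(S_0)$.

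First I would record two structural facts. By Section \ref{sec:vol}, on any spacelike, codimension-two $C^2$ submanifold $S \subset \mathcal H$, the degenerate volume $\vol_{\mathcal H}$ restricts to the intrinsic Riemannian volume of $(S, g|_S)$. The preceding remark shows that $T_t : S_0 \to S_t$ is a $C^1$ diffeomorphism, that $S_t$ is spacelike and acausal, and that $DT_{t,x}: T_xS_0 \to T_{T_t(x)} S_t$ is an isomorphism between non-degenerate Euclidean spaces (using the Jacobi-equation description together with the absence of focal points from Lemma 4.15 of \cite{chdegaho}). Consequently
$$\m_{\mathcal H}|_{S_i} = e^{-V}\vol_{S_i}, \qquad i\in\{0,t\},$$
and the Jacobian $\det DT_t(x)$, defined using the induced Riemannian inner products on $T_xS_0$ and $T_{T_t(x)}S_t$, is continuous and non-vanishing on $U$. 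Since $DT_0 = \id$ and $t \mapsto \det DT_t$ is continuous on the connected interval $[0,1]$, we have $\det DT_t > 0$, so no absolute value is needed.

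Next I would test the pushforward identity $\mu_t = (T_t)_{\#} \mu_0$ against an arbitrary bounded Borel function $f$ on $M$. Using $\mu_i = \rho_i \m_{\mathcal H}$, $\supp \mu_0 \subset S_0$, $\supp \mu_t \subset S_t$, and the volume identification above, this yields
$$\int_{S_t} f\, \rho_t e^{-V}\, d\vol_{S_t} = \int_{S_0} (f\circ T_t)\, \rho_0 e^{-V}\, d\vol_{S_0}.$$
Applying the classical Riemannian change of variables formula to the left-hand side via the $C^1$ diffeomorphism $T_t$ gives
$$\int_{S_0} (f\circ T_t)(\rho_t\circ T_t)\, e^{-V\circ T_t}\, \det DT_t \, d\vol_{S_0} = \int_{S_0} (f\circ T_t)\, \rho_0\, e^{-V}\, d\vol_{S_0}.$$
Since $T_t$ is a diffeomorphism, $f\circ T_t$ ranges over all bounded Borel functions on $S_0$, and varying $f$ yields the claimed pointwise identity $\vol_{S_0}$-almost everywhere on $U$.

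The only nontrivial input is the identification of $\vol_{\mathcal H}$ on a spacelike cross-section with the intrinsic Riemannian volume of that cross-section, which reduces the argument to the classical Monge-Ampère derivation from pushforwards; the sign issue for $\det DT_t$ is handled by the continuity/connectedness remark above, and the regularity needed to invoke the smooth change of variables was arranged in the previous lemma and remark.
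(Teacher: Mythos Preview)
Your argument is correct and is essentially the same as the paper's: both use the pushforward identity $\mu_t=(T_t)_\#\mu_0$ and the area/change-of-variables formula for the $C^1$ diffeomorphism $T_t:S_0\to S_t$, the only cosmetic difference being that the paper tests against arbitrary measurable sets $W\subset S_0$ while you test against arbitrary bounded Borel functions $f$. Your additional remarks on identifying $\vol_{\mathcal H}|_{S_t}$ with the intrinsic Riemannian volume and on the positivity of $\det DT_t$ simply make explicit points the paper leaves implicit.
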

\begin{proof}
Let $W\subset S_0$ be an measurable and arbitrary. $S_0$ and $S_t$ are equipped with the restricted metric $g$. Then 
\begin{align*}
\mu_0(W)&= \mu_t(T_t(W))= \int_{T_t(W)} d\mu_t\\
&= \int_{T_t(W)} e^{-V} \rho_t d\vol_{\mathcal H}= \int_We^{-V\circ T_t} \rho_t\circ T_t \det DT_{t} d\vol_{\mathcal H}
\end{align*}
where we used the  area formula for the map $T_t: S_0\rightarrow S_t$. Since $W$ was arbitrary, the claim follows. 
\end{proof}
\begin{theorem}\label{mainforw} Let $(M, g)$ be  a smooth, time-oriented, $(n+1)$-dimensional Lorentzian manifold and $V\in C^{\infty}(M)$. Let $\mu_0, \mu_1\in \mathcal P(M)$ be acasual  and  null connected, and let $(\mu_t)_{t\in [0,1]}$ be corresponding null displacement interpolation.

\begin{enumerate}
\item If  the null energy condition holds and $\mu_0, \mu_1\in \mathcal P(M, \vol_{\mathcal H})$, then it follows  that
$t\in [0,1]\mapsto S_{n-1}(\mu_t|\vol_{\mathcal H})$ is convex.
\smallskip
\item  If the weighted Lorentzian manifold $(M,g, e^{-V})$ satisfies the Bakry-Emery $N$-null energy condition for $N\geq n-1$ and $\mu_0, \mu_1\in \mathcal P(M, \m_{\mathcal H})$, then it holds that
$t\in [0,1]\mapsto S_{N'}(\mu_t|\m_{\mathcal H})$ $\forall N'\geq N$. 
is convex.
\end{enumerate}
\end{theorem}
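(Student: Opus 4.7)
The plan is to use the area formula together with Lemma~\ref{lem:ma} to rewrite the entropy as an integral against a fixed reference measure on $S_0$, thereby reducing displacement convexity of $S_{N'}$ to pointwise concavity in $t$ of a Jacobian--type functional along each null generator. That concavity will follow from the null Raychaudhuri equation combined with the Bakry--Emery $N$-null energy condition, via a completing-the-square identity.

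Concretely, I would set $J_t(x):=\det DT_t(x)$, which is strictly positive on $[0,1]$ by the absence of focal points along the generators noted in the remark following Lemma~\ref{lem:optimalmap}. Applying the area formula to the definition of $S_{N'}$ and substituting $\rho_t\circ T_t=\rho_0\,e^{V\circ T_t-V}/J_t$ from Lemma~\ref{lem:ma}, a direct manipulation yields
\begin{align*}
S_{N'}(\mu_t|\m_{\mathcal H})=-\int \rho_0(x)^{1-1/N'}\,F_t(x)^{1/N'}\,e^{-(1-1/N')V(x)}\,d\vol_{\mathcal H}(x),
\end{align*}
where $F_t(x):=e^{-V(T_t(x))}J_t(x)$. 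The factor $\rho_0^{1-1/N'}e^{-(1-1/N')V}\,d\vol_{\mathcal H}$ is $t$-independent, so convexity of $t\mapsto S_{N'}(\mu_t|\m_{\mathcal H})$ reduces to concavity of $t\mapsto F_t(x)^{1/N'}$ for $\mu_0$-a.e.\ $x$.

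To establish that concavity, I set $h(t):=\log F_t(x)=-V(\tilde\gamma_x(t))+\log J_t(x)$ along the affinely parametrized null geodesic $\tilde\gamma_x(t)=T_t(x)$, and write $\theta(t)=(\log J_t)'(x)$ for the expansion of the null congruence ruling $\mathcal H$. Differentiating $V\circ\tilde\gamma_x$ twice using $D_t\tilde\gamma_x'=0$, and invoking the null Raychaudhuri equation in the hypersurface-orthogonal (twist-free) case,
\begin{align*}
\theta'(t)=-\tfrac{1}{n-1}\theta(t)^2-\sigma(t)^2-\ric(\tilde\gamma_x',\tilde\gamma_x'),
\end{align*}
and discarding $\sigma^2\geq 0$, I obtain
\begin{align*}
h''(t)\leq -\tfrac{1}{n-1}\theta(t)^2-(\ric+\nabla^2 V)(\tilde\gamma_x',\tilde\gamma_x').
\end{align*}

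For any $N'>N$, the Bakry--Emery $N$-null condition gives $(\ric+\nabla^2V)(\tilde\gamma_x',\tilde\gamma_x')\geq \langle\nabla V,\tilde\gamma_x'\rangle^2/(N'-n+1)$. Since $h'(t)=\theta(t)-\langle\nabla V,\tilde\gamma_x'\rangle$, the elementary inequality $(a-b)^2/(p+q)\leq a^2/p+b^2/q$ with $a=\theta$, $b=\langle\nabla V,\tilde\gamma_x'\rangle$, $p=n-1$, $q=N'-n+1$ (equivalent to $(qa+pb)^2\geq 0$) yields $h''(t)+(h'(t))^2/N'\leq 0$, which is exactly $(F_t^{1/N'})''\leq 0$. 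This proves (2) for $N'>N$, and the boundary case $N'=N$ follows by continuity of $S_{N'}$ in $N'$. Case (1) is the unweighted specialization $V\equiv 0$, $N=N'=n-1$, in which the argument collapses to $h''=\theta'\leq -\theta^2/(n-1)$ via Raychaudhuri under $\ric(\tilde\gamma_x',\tilde\gamma_x')\geq 0$. The main obstacle I expect is the careful dimensional bookkeeping on the null hypersurface: since the null tangent direction is quotiented out, the Raychaudhuri denominator is the transverse dimension $n-1$, which explains both the threshold $N\geq n-1$ in the Bakry--Emery null condition and the precise weights $(n-1)$ and $(N'-n+1)$ needed in the completing-the-square step.
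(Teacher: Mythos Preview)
Your proposal is correct and follows essentially the same route as the paper: rewrite $S_{N'}(\mu_t|\m_{\mathcal H})$ as an integral over $S_0$ via Lemma~\ref{lem:ma}, reduce to concavity of $t\mapsto \big(e^{-V\circ T_t}\det DT_t\big)^{1/N'}$, and obtain the differential inequality $h''+(h')^2/N'\le 0$ from the traced null Riccati/Raychaudhuri equation together with the Cauchy--Schwarz/completing-the-square step. The only cosmetic difference is that the paper derives the Raychaudhuri inequality \eqref{rara} from scratch (Jacobi equation $\to$ optical/Riccati equation $\to$ self-adjointness of the null Weingarten map $\to$ $\mbox{tr}(U^2)\ge\frac{1}{n-1}(\mbox{tr}\,U)^2$), whereas you invoke it directly and discard the shear term; the paper explicitly remarks that this shortcut is valid.
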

\begin{proof} Let $\tilde K(x)= r(x) K(x), x\in U\subset S_0,$ the vector field given by Lemma \ref{lem:optimalmap} and let $T_t(x)=\exp_x(t\tilde K(x))=\tilde \gamma_x(t)$. The family of maps
$$t\in [0,1]\mapsto A_x(t):={DT}_{t}(x)
$$
from $T_xS_0$ to $T_{\tilde \gamma_x(t)}S_t$
satisfies the Jacobi equation 
\begin{align}\label{jacobi} A''_x+ R(A_x, \tilde \gamma'_x)\tilde\gamma'_x=0
\end{align}
with $A_x(0) v=  v$ and $A_x'(0) v={{ \nabla}_{ v} \tilde K}$ where $R$ is the curvature tensor.  
\smallskip

We define $$t\in [0,1]\mapsto U_x(t) =  A'_x(t)  A_x^{-1}: T_{\tilde \gamma_x(t)}S_t\rightarrow T_{\gamma_x(t)}\mathcal H$$ and check that it satifies the Riccatti equation
\begin{align*}U_x'(t)= A_x''(t) A^{-1}_x(t)- A_x'(t)A^{-1}_x(t)A_x'(t)A_x^{-1}(t)= -{ R}(\cdot, \gamma_x')\gamma_x'- U_x^2(t).
\end{align*}
This equation is also known as the {\it Optical Equation} \cite{chdegaho}.
\smallskip

Let $E_1(t), \dots,  E_{n-1}(t)\in T_{\tilde \gamma_x(t)}S_t$ be the solutions of the linear equation $$P_t\nabla_{\tilde \gamma_x'(t)} E_i=0, \ \ E_i(0)=e_i$$ for an orthonormal Basis $e_1, \dots, e_{n-1}$ of 
$T_{x}S_0$.  
The map $P_t: T_{\tilde\gamma_x(t)}\mathcal H\rightarrow T_{\tilde \gamma_x(t)}S_t$ is the linear projection. 
\smallskip

{\it Claim.} $E_1(t), \dots, E_{n-1}(t)$ is an orthonormal basis of $T_{\tilde \gamma_x(t)}S_t$. 
\smallskip

\noindent 
{\it Proof of the claim.} We observe $$\frac{d}{dt}\langle E_i(t), E_j(t)\rangle  = \langle E_i'(t), E_j(t)\rangle + \langle E_i(t), E_j'(t)\rangle =0.$$
Hence $\langle E_i(t), E_j(t)\rangle$ is constant along $\tilde \gamma_x$. $ \blacktriangle$

The trace of $U_x(t)$ restricted to $T_{\tilde \gamma_x(t)}S_t$ is then 
$$
\mbox{tr} U_x(t)= \sum_{i=1}^{n-1} \langle E_i(t), U_x(t) E_i(t)\rangle = \sum_{i=1}^{n-1} \langle E_i, P_tU_x(t)E_i(t)\rangle.
$$

{\it Claim.} The derivative $P_t'$ is a linear map from $T_{\tilde \gamma_x(t)}\mathcal H\rightarrow T_{\tilde \gamma_x(t)}S_t^{\perp}$. 
\smallskip

\noindent
{\it Proof of the claim.}   $P'_t: T_{\tilde \gamma_x(t)}\mathcal H\rightarrow T_{\tilde \gamma_x(t)}\mathcal H$ is a linear map.  
Note that $$E_1(t), \dots, E_{n-1}(t), \tilde \gamma'_x(t)= E_n(t)$$ is an orthogonal basis of $T_{\tilde \gamma_x(t)}\mathcal H$
and
 $\langle P_tE_i(t), E_j(t)\rangle = \langle E_i(t), E_j(t)\rangle$ for $i,j=1, \dots, n$. Therefore $$0= \frac{d}{dt}\langle P_t E_i(t), E_j(t)\rangle= \langle \left(\frac{d}{dt} P_t\right) E_i(t), E_j(t)\rangle \ \ \forall i,j\in \{0,\dots, n\}.$$
This proves the claim.\ \ $ \blacktriangle$
\smallskip

{\it Claim.}  $\mbox{tr}(U_x)'= \mbox{tr}(U_x')$. 
\smallskip\\
{\it Proof of the claim.}
We compute  
\begin{align*}
\langle E_i, U_xE_i\rangle'& = \langle E_i, P U_xE_i\rangle'\\
&= \langle E_i', PU_xE_i\rangle + \langle E_i, P' U_xE_i\rangle + \langle E_i, PU_x'E_i\rangle + \langle E_i, PU_x PE_i'\rangle\\
&= \langle E_i, PU_x'E_i\rangle.
\end{align*}
Summing w.r.t. $i=1, \dots, n-1$ yields the claim. \  $ \blacktriangle$
\smallskip

Hence taking the trace of the optical equation we obtain
$$\mbox{tr}(U_x)' + \mbox{tr}(U_x^2) + \mbox{tr} R(\cdot, \tilde\gamma_x')\tilde \gamma_x'=0$$
where $\mbox{tr} R(\cdot, \tilde \gamma_x')\tilde \gamma_x= \sum_{i=1}^{n-1}\langle R(\bar E_i, \tilde \gamma_x')\tilde \gamma_x', \bar E_i\rangle= \ric(\tilde \gamma_x', \tilde \gamma_x')\geq 0$ and the last inequality follows from the null energy condition.
\smallskip

{\it Claim.} Fix $t_0$. $U_x(t_0)$ is a self-adjoint operator on $T_{\tilde \gamma_x(t_0)}S_{t_0}$.
\smallskip\\
{\it Proof of Claim.} Note that $U_x(t) = A_x'(t) A^{-1}_x(t)$ and $A_x(t_0+s)z=J(s)$ satisfies the Jacobi equation with $J(0)=v\in T_{\tilde \gamma_x}S_{t_0}$ where $vA^{-1}_x(t_0)=z$ and $J'(0)=w\in T_{\tilde \gamma_x(t_0)}\mathcal H$.  By standard Riemannian calculus we can write 
$$J(s)= \frac{\partial}{\partial \epsilon} \Big|_{\epsilon=0}\exp_{\alpha(\epsilon)} (s \hat K\circ \alpha(\epsilon))$$
where $\alpha:(-\epsilon, \epsilon)\rightarrow \mathcal H$ is $C^2$ and satisfies $\alpha(0)=\tilde \gamma_x(t_0)=:y$,  $\alpha'(0)=v$  and $\hat K$ is a $C^1$ vectorfield on $\mathcal H$ in a neighborhood of $y$  such that $\hat K(0)= \tilde \gamma_x'(t_0)$ and $\nabla_v \hat K(y)= w$.  Hence, $U_x(t_0)v = A_x'(t_0)z=J'(0)=w= \nabla_v\hat K(y)$. Since $\tilde K$ is also a normal vectorfield, this is the null Weingarten map  of $\mathcal H$ in $y=\tilde \gamma_x(t_0)$. Hence $U_x(t_0)$ is self-adjoint. \ $ \blacktriangle$
\smallskip

By an application of the Cauchy-Schwarz inequality it follows  $$\mbox{tr}(U^2_x)\geq \frac{1}{n-1} \mbox{tr}(U_x)^2.$$
Hence
$\mbox{tr}(U_x)' + \frac{1}{n-1} \mbox{tr}(U_x)^2 \leq 0.$ 
\smallskip

We set $ \det(A_x)=y_x$. It follows  that 
$(\log y_x)'= \mbox{tr}(A_x' A_x^{-1})= \mbox{tr}(U_x)$ and therefore 
\begin{align}\label{rara}(\log y_x)''+ \frac{1}{n-1}((\log y_x)')^2\leq 0\end{align}
as well 
$\frac{d^2}{dt^2} y_x^{\frac{1}{n-1}}\leq 0$, or equivalently
$$y_x^{\frac{1}{n-1}}(t)\geq (1-t)\underbrace{y_x^{\frac{1}{n-1}}(0)}_{=1} + t\underbrace{y^{\frac{1}{n-1}}_x(1)}_{=(\det DT(x))^{\frac{1}{n-1}}}.$$
Equation \eqref{rara} also follows from the {\it Raychaudhuri equation}. However for this paper we gave a derivation of \eqref{rara} that is closer to ideas in optimal transport.
\smallskip

We have $\mu_t= (T_t)_{\#} \mu_0$ with $T_t$. Then,  together with Lemma \ref{lem:ma},
\begin{align*}
S_{n-1}(\mu_t | \vol_{\mathcal H})&= -\int \rho_t^{1-\frac{1}{n-1}} d\vol_{\mathcal H}\\
&= -\int \rho_t^{-\frac{1}{n-1}} \circ T_t d\mu_0\\
&= - \int_{S_0} \rho_0^{-\frac{1}{n-1}} (\det DT_t)^{\frac{1}{n-1}} d\mu_0\\
&\leq - (1-t) \int \rho_0^{-\frac{1}{n-1}} d\mu_0 - t\int \rho_0^{-\frac{1}{n-1}}( \det DT)^{\frac{1}{n-1}} d\mu_0\\
& ={ (1-t) S_{n-1}(\mu_0|\m_{\mathcal H}) + t S_{n-1}(\mu_1|\m_{\mathcal H}).}^{}
\end{align*}
This finishes the proof of (1).

Similarly, we can consider $z_x(t) =e^{- {V\circ \tilde \gamma_x(t)}}y_x(t)$. It follows that 
\begin{align*}&(\log z_x)''(t)=-\langle \nabla^2 V|_{ \tilde \gamma_x(t)} \tilde \gamma_x'(t) , \tilde \gamma_x'(t)\rangle  + (\log y_x)''(t)\\&\leq -\ric^V(\tilde \gamma_x(t), \tilde \gamma_x(t))- \frac{1}{n-1}((\log y_x)')^2  - \frac{1}{N'-n+1}\langle V\circ \tilde \gamma_x(t), \tilde \gamma_x'(t)\rangle^2\\
&\leq- \frac{1}{N'} ((\log z_x)')^2
\end{align*}
where we used $\frac{1}{n-1} a^2+ \frac{1}{N'-(n-1)}b^2 \geq \frac{1}{N'} (a+b)^2$ and the Bakry-Emery $N$-null energy condition.

Then we also have $\frac{d^2}{dt^2} z_x^{\frac{1}{N'}}\leq 0$. Again using Lemma \ref{lem:ma}, it follows
\begin{align*}
S_{N'}(\mu_t | \m_{\mathcal H})&= -\int \rho_t^{1-\frac{1}{N'}} d\m_{\mathcal H}= -\int \rho_t^{-\frac{1}{N'}} \circ T_t d\mu_0\\
&= - \int_{S_0} \rho_0^{-\frac{1}{N'}} e^{-\frac{1}{N'}( V\circ T_t-V)}\det DT_t^{\frac{1}{N'}} d\mu_0\\
&\leq - (1-t) \int \rho_0^{-\frac{1}{N'}} d\mu_0 - t\int \rho_0^{-\frac{1}{N'}} e^{-\frac{1}{N'}(V\circ T_1 - V) }\det DT_t^{\frac{1}{N'}} d\mu_0\\
& ={ (1-t) S_{N'}(\mu_0|\m_{\mathcal H}) + t S_{N'}(\mu_1|\m_{\mathcal H}).}^{}
\end{align*}
This finishes the proof of (2).
\end{proof}
\subsection{Null displacement convexity implies the null energy condition}
\begin{theorem}\hspace{0mm}
\begin{enumerate}
\item 
Let $(M,g)$ be a $(n+1)$-dimensional, time-oriented  Lorentzian manifold. 
Assume for every null hypersurface $\mathcal H$ and for every $\mu_0, \mu_1\in \mathcal P(M, \vol_{\mathcal H})$ that are null connected via a null coupling $\pi$, it holds 
$$S_{n-1}(\mu_t|\vol_{\mathcal H})\leq (1-t) S_{n-1}(\mu_0|\vol_{\mathcal H}) + t S_{n-1}(\mu_1|\vol_{\mathcal H})$$
where $\mu_t$ is the induced null displacement interpolation. 
\smallskip\\
Then $(M, g)$ satisfies the null enery condition. 
\smallskip
\item Let $(M,g, e^{-V})$ be a weighted, $(n+1)$-dimensional, time-oriented Lorentzian manifold with $V\in C^{\infty}(M)$ and assume for every null hypersurface $\mathcal H$ and for every $\mu_0, \mu_1\in \mathcal P(M, \m_{\mathcal H})$ that are null connected via a null coupling $\pi$, it holds $\forall N'> N\geq n-1$ that
$$S_{N'}(\mu_t|\m_{\mathcal H})\leq (1-t) S_{N'}(\mu_0|\m_{\mathcal H}) + t S_{N'}(\mu_1|\m_{\mathcal H}) ,$$ 
Then $(M, g, e^{-V})$ satisfies the Bakry-Emery $N$-null  energy condition.
\end{enumerate}
\end{theorem}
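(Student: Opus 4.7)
The plan is to argue pointwise at an arbitrary $p\in M$ and future-directed null $v\in T_pM$ by constructing localized null connected measures whose displacement interpolation exposes the Ricci quantity at $(p,v)$ in its second-order behavior at $t=0$. Fix such $(p,v)$ and pick a small codimension-$2$ spacelike submanifold $S_0\ni p$ with $v\perp T_pS_0$. Extend $v$ along $S_0$ to a smooth future-directed null normal field $L$ with $L(p)=v$, and let $\mathcal H$ be the null hypersurface generated by the corresponding null geodesic congruence as in Example \ref{ex:nullcones}. A key observation is that by perturbing the local shape of $S_0$ at $p$ at second order (equivalently, its null second fundamental form in direction $v$), the null Weingarten map $U_p(0)=\nabla_{\cdot}\tilde K|_p$, viewed as a symmetric endomorphism of $T_pS_0$, can be prescribed to equal any symmetric operator; in particular we may realize $U_p(0)=\lambda\,\mathrm{id}$ for any $\lambda\in\R$, and we will exploit this freedom in the final optimization.

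For each $\epsilon>0$ let $B_\epsilon\subset S_0$ be a small Riemannian ball centered at $p$, and set $\mu_0^\epsilon=c_\epsilon\m_{\mathcal H}|_{B_\epsilon}$ (normalized) and $\mu_1^\epsilon=(T_1)_\#\mu_0^\epsilon$ with $T_t(x)=\exp_x(t\tilde K(x))$. These measures are acausal and null connected through the dynamical coupling $t\mapsto T_t$. Applying Lemma \ref{lem:ma} and the area formula as in the proof of Theorem \ref{mainforw} yields
\begin{equation*}
S_{N'}(\mu_t^\epsilon|\m_{\mathcal H})=-\int \rho_0(x)^{-1/N'}\,z_x(t)^{1/N'}\,d\mu_0^\epsilon(x),\qquad z_x(t):=e^{-(V\circ\tilde\gamma_x(t)-V(x))}\det DT_t(x).
\end{equation*}
The hypothesized convexity of $t\mapsto S_{N'}(\mu_t^\epsilon|\m_{\mathcal H})$ translates to an integrated concavity of $t\mapsto z_x(t)^{1/N'}$ against $\rho_0^{-1/N'}d\mu_0^\epsilon$. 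Letting $\epsilon\downarrow 0$ so that $\mu_0^\epsilon$ concentrates at $p$, continuity of the Jacobi data in $x$ upgrades this to pointwise concavity of $t\mapsto z_p(t)^{1/N'}$ on $[0,1]$.

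Pointwise concavity of $z_p^{1/N'}$ is equivalent to $(\log z_p)''(t)+\tfrac{1}{N'}\bigl((\log z_p)'(t)\bigr)^2\leq 0$. Evaluating at $t=0$ and plugging in the Jacobi/Riccati identities $(\log z_p)'(0)=\mathrm{tr}(U_p(0))-\langle\nabla V,v\rangle$ and $(\log z_p)''(0)=-\mathrm{ric}^V(v,v)-\mathrm{tr}(U_p(0)^2)$ derived in the forward direction, we obtain
\begin{equation*}
\mathrm{ric}^V(v,v)\;\geq\;\frac{1}{N'}\bigl(\mathrm{tr}(U_p(0))-\langle\nabla V,v\rangle\bigr)^2-\mathrm{tr}(U_p(0)^2).
\end{equation*}
The right-hand side is a quadratic in the eigenvalues of $U_p(0)$; Cauchy--Schwarz shows its maximum at fixed trace is attained when $U_p(0)=\lambda\,\mathrm{id}$, and a routine one-variable optimization over $\lambda$ yields the maximizer $\lambda=-\langle\nabla V,v\rangle/(N'-n+1)$ with maximal value $\langle\nabla V,v\rangle^2/(N'-n+1)$. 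Choosing $S_0$ so that $U_p(0)$ realizes this optimum therefore gives $(N'-n+1)\mathrm{ric}^V(v,v)\geq\langle\nabla V,v\rangle^2$ for every $N'>N\geq n-1$, which is Definition \ref{def:benull}; specializing to $V\equiv 0$ and $N=n-1$ recovers the unweighted case (1) of the theorem.

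The main obstacle is the realizability claim for $U_p(0)$: one has to check that, holding the null generator at $p$ fixed at $v$, a second-derivative perturbation of $S_0$ suffices to realize any prescribed symmetric operator as the null Weingarten map at $p$, and that the resulting deformed $S_0$ remains spacelike and acausal so that Example \ref{ex:nullcones} still applies. This is a standard graph-type construction but needs to be written out carefully. The localization step $\epsilon\downarrow 0$ is routine provided one has uniform $C^1$-control of $T_t$ and continuous dependence of $z_x(t)$ on $x$, both of which follow from smoothness of $g$, $V$ and $\tilde K$ together with the no-focal-points statement used in Lemma \ref{lem:optimalmap}.
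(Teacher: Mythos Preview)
Your approach is correct and shares the paper's essential construction: both build a null hypersurface $\mathcal H$ through $p$ whose null second fundamental form at $p$ is $\lambda\,\mathrm{id}$, and both ultimately exploit the choice $\lambda=-\langle\nabla V(p),v\rangle/(N'-n+1)$ (resp.\ $\lambda=0$ in the unweighted case). The paper's Step~1, carried out in normal coordinates, supplies precisely the realizability of $U_p(0)=\lambda\,\mathrm{id}$ that you flag as ``the main obstacle,'' so that part is already available.

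Where you diverge is organizational. The paper argues by contradiction: assuming a strict violation $(N'-n+1)\ric^V(v,v)<\langle\nabla V,v\rangle^2$, it fixes the optimal $\lambda$ from the outset and uses continuity of $\ric^V$, $\nabla V$ and $U_x$ to propagate strict inequalities to a ball $B_\eta(p)$ and a small $r$-window, yielding $(z_x^{1/N'})''>0$ for every $x\in B_\eta(p)$ and hence strict convexity of the entropy. You instead proceed directly: localize with $\mu_0^\epsilon$, pass to the limit $\epsilon\to 0$ to extract the pointwise inequality $(\log z_p)''(0)+\tfrac{1}{N'}((\log z_p)'(0))^2\le 0$, and only then optimize over $\lambda$. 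Your route is more modular and avoids the somewhat ad hoc $\varepsilon$-bookkeeping of the paper; the paper's route avoids one subtlety you gloss over, namely that the single three-point inequality $S_{N'}(\mu_t)\le(1-t)S_{N'}(\mu_0)+tS_{N'}(\mu_1)$ does not by itself give $(z_p^{1/N'})''(0)\le 0$. To close that step you must either observe that intermediate measures $\mu_{t_0}^\epsilon,\mu_{t_1}^\epsilon$ are again acausal and null connected (so the hypothesis applies to them and gives genuine convexity of $t\mapsto S_{N'}(\mu_t^\epsilon)$), or vary the scale of $\tilde K$ and Taylor-expand the resulting midpoint inequality. Either fix is routine.
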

\begin{proof} We argue by contradiction in both cases.  
\smallskip\\ 
For (1) assume there exists a null vector $v\in TM$ and  $\varepsilon>0$ such that 
$$\ric(v,v) \leq - 6 \varepsilon  <0.$$
For (2) assume $\exists$  $N'>N$ and a null vector $v\in T_pM$ with $$(N'-n+1)\ric^V(v,v)- \langle \nabla V, v\rangle^2<0$$ and let $\varepsilon >0$ such that $$\ric^V(v,v)-\frac{1}{N'-n+1}\langle \nabla V, v\rangle^2 \leq -6\varepsilon  <0.$$
In the following by a slight abuse of notation we write $N'=N$.

{\bf 1.}
Consider the exponential map $\exp_p: \mathcal U\subset T_pM\rightarrow M$  in $p$ where $\mathcal U$ is an open subset of $T_pM$ such that $\exp_p$ is a diffeomorphismus. Let $\mathcal I: \R^{n+1}_1\rightarrow (T_pM, g_p)$ an isometrie between the Minkowski space $\R^{n+1}_1$ and the tangent space at $p$. Hence $\phi:= \exp_p \circ \mathcal I: \mathcal I^{-1}(\mathcal U)=:\mathcal V\rightarrow \phi(\mathcal V)$ is a normal coordinate map. 

We choose an orthonormal basis $e_0, e_1, \dots, e_n$  in $\R^{n+1}_1$ with $e_0$ timelike and $e_1, \dots, e_n$ spacelike  such that  $e_0+e_1= \sigma v$ for some $\sigma>0$.  
The vectors $e_1,  \dots, e_n$ span a spacelike linear subspace $\Lambda'= \langle e_1, e_2, \dots, e_n\rangle$ in $\R^{n+1}_1$ that is isometric to $\R^n$. 

In $\Lambda'\simeq \R^n$ we choose a codimension 1 submanifold $\mathcal S$ such that $0\in \mathcal S$ and such that the second fundamental form in $0$ satisfies $\Pi_p= \lambda \langle \cdot, \cdot  \rangle_{\R^{n-1}}$ for $\lambda\in \R$ that we choose later. 

We define $\Sigma= \phi(\Lambda'\cap B_\delta(0))$ and $S= \phi(\mathcal S\cap B_\delta(0))$ where $B_\delta(0)$ is the ball w.r.t. the standard Euclidean metric for some $\delta>0$ such that $B_\delta(0)\subset \mathcal I^{-1}(\mathcal U)$. $\Sigma$ and $S$ are spacelike submanifolds in $M$ such that $S\subset \Sigma$.

Let $N_0: S\rightarrow T\Sigma$ be the smooth normal unit vectorfield of $S$ as submanifold in $\Sigma$, and let $N_1: \Sigma\rightarrow TM$ be the future pointing normal unit vector field on $\Sigma$ in $M$ such that we have $D\phi_0(e_1)= N_0(p)$ and $D\phi_0(e_0)=N_1(0)$.  Hence $\sigma v=  N_0(p) + N_1(p)$. We define $N: S\rightarrow TM$ as $N_0+ N_1|_{S}$. It follows that $N$ is a null vector field along $S$ and 
$$\nabla_v N= \nabla_vN_0 \ \ \forall v\in T_pS$$
since the covariant derivative of $N_1$ at $p$ in direction of vectors in $T_p\Sigma$  vanishes.  Here we use the properties of normal coordinates at the base point. Hence, we also have 
$$\langle \nabla_v N(p), v\rangle = \Pi^{\Sigma}_S(v,v)$$
where $\Pi^{\Sigma}_S$ is the second fundamental form of $S$ in the ambient space $\Sigma$.

We can extend $N: S\rightarrow TS$ to a smooth null vector field on $\Sigma$ that we  denote as $\hat N$. 
We define the map $\Phi: \Sigma\times (-\epsilon, \epsilon)\rightarrow M$ via $\Phi(x, t)= \exp_x(t \hat N(x))$ and the restriction $\Phi|_{S\times (-\epsilon, \epsilon)}= \Psi$. We can compute the differential of $\Psi$  in $(p,0)$ and see 
it is injective. 
Hence, we can choose a  neighborhood $O'\cap \Sigma$ of $p$ in $\Sigma$ and $\delta$ such that $\Psi|_{O\times (-\delta, \delta)}$ is a diffeomorphism to its image where $O= S\cap O'$.  By the definition of the map $\Psi$ the image $\Psi(O\times (-\delta, \delta))=:\mathcal H$ is a null hypersurface in $M$ that contains $O$. 

Because $N(q)$ is a null vector in $T\mathcal H$ we also have that $N(q)\in T_q\mathcal H^{\perp}$ for $q\in S$. Now, we extend $N$ to a smooth, non-vanishing null vector field  $N$ on a neighborhood of $p\in \mathcal H$ in $\mathcal H$. Therefore the second fundamental form of $\mathcal H$ in $p$ can be computed as $\langle \nabla_vN(p), v\rangle = \Pi^M_{\mathcal H}(v,v)$ and so 
$$\langle \nabla_vN(p), v\rangle = \Pi^M_{\mathcal H}(v,v)= \Pi^{\Sigma}_S(v,v)= \lambda \langle v, v\rangle \ \forall v\in S.$$

\begin{remark}
Let us make a few comments at this point. If $K$ is a nonvanishing, smooth null vector field along $\mathcal H$, the null Weingarten map is defined as $\nabla_X K \mod K= b(\overline X)$ where $\overline X$ is the equivalence class of spacelike vector $X$ modulo $K$. If $\tilde K= f K$ is another such null vector field for a smooth function $f$, then $\nabla v \tilde K= f \nabla_v K \mod K$. Hence the Weingarten map $b$  is a tensor field and depends at a given point $p\in \mathcal H$ only on the value of $K$ at $p$. 
The null second fundamental form of $\mathcal H$ is then defined as $\overline \Pi_{\mathcal H}^M(\overline X, \overline Y)= \langle \nabla_XK, Y\rangle=\Pi^M_{\mathcal H}(X,Y)$ for $X, Y$ spacelike.  This is consistent with the above.
\end{remark}

We set $\Sigma_r= \Phi( \Sigma\times \{r\})$ as well as $S_r= \Psi(S\times\{r\})$ for $r>0$ small.  $\Sigma_r$ and $S_r$ are space-like.  
A transport relation is given by 
$R_{\mathcal H}= \{ (x,y)\in \mathcal H^2: x\in S, y\in \Psi(x,r)= \exp_x(rN(x))\}$.  Now we choose $\mu_0$ supported in $S$ and consider $\mu_1= (\Psi(\cdot, r))_{\#}$. The coupling $\pi= (\mbox{id}_S, \Psi(\cdot, r))_{\#}\mu_0$ is a null coupling between $\mu_0$ and $\mu_1$. Let $\mu_t$ be the induced null displacement interpolation.  We also set $\gamma_q(t) = \Psi(q,t)$. Then $\gamma_q'(0)=v$.  A transport map $T$ like in Lemma is given by $\Psi(, r)|_U$ for any open subset $U\subset S$ that contains the support of $\mu_0$. Moreover $\tilde K= rN$.
We assume $\mu_r$ is supported in $B_\eta(p)\subset U$ where $B_\eta(p)$ is a  geodesic ball w.r.t. the induced intrinsic distance of $g$ on $S$.

Repeating the calculation as in Theorem \ref{mainforw} we obtain
$$\mbox{tr}(U_x)' + \mbox{tr}(U_x^2) +  \ric_x(\gamma'_x,\gamma_x')=0 \ \ \forall x\in U$$
where $U_x= A_x'A_x^{-1}$ and  $A_x(t)= (DT_t)|_x$, $t\in [0,1]$, with $A_x'(0)w = \nabla_w \tilde K= r\nabla_w N$.  Hence 
$\langle A_p'(0) w,w\rangle = \langle U_p(0)w, w\rangle= r\lambda \langle w, w\rangle$. Here $w\in T_pS$. 
\\
\\
\noindent
{\bf 2.} 
First we prove (1).
For this we choose $\lambda =0$. Recall $\gamma_p'(0)=v$.

By continuity of  $\ric$ and $(x,t)\mapsto \gamma_x'(t)$  we can choose $r$ and $\eta$ such that 
$$\ric(\gamma_x'(t), \gamma_x'(t))\leq - 5 \varepsilon  <0\ \ \  \forall x\in B_\eta(p) \forall t\in [0,1].$$
By continuity of $(x,t) \mapsto U_x(t)$ we can choose  $\eta$ and $r$ small enough to obtain
\begin{align*} \varepsilon\geq \mbox{tr}U^2_x(t) \ \ \& \  \ \frac{1}{n-1}\left( \mbox{tr} U_x(t)\right)^2 \geq -\varepsilon\ \ \  \forall x\in B_\eta(p) \forall t\in [0,1].
\end{align*}
Then we compute
\begin{align*}
\mbox{tr}(U_x)' =- \mbox{tr}(U_x^2) - \ric_x(\gamma'_x,\gamma_x')\geq -\varepsilon+ 5 \varepsilon \geq 4\varepsilon \geq - \frac{1}{n-1} \left(\mbox{tr} U_x(t)\right)^2 +3 \varepsilon .
\end{align*}
Hence, setting $y_x(t) = \det A_x(t)$ we get 
$$(\log y_x)''(t) + \frac{1}{n-1} ((\log y_x)'(t))^2 > 0 \ \ \forall x\in B_\eta(p)$$
and therefore 
$$\left( (y_x)^{\frac{1}{n-1}}\right)'' > 0 \mbox{ on } [0,1] \ \ \forall x\in B_\eta(p).$$
It follows 
$$(y_x)^{\frac{1}{n-1}}(t) < (1-t) (y_x)^{\frac{1}{n-1}}(0) + t (y_x)^{\frac{1}{n-1}}(1) \ \ \forall x\in B_\eta(p) \ \forall t\in [0,1].$$
Here we used that strict positivity of the second derivative is sufficient for strict convexity. 

We can proceed as in the end of the proof of Theorem \ref{mainforw} and obtain 
$$S_{n-1}(\mu_t| \m_{\mathcal H})>(1-t) S_{n-1}(\mu_0|\m_{\mathcal H}) + t S_{n-1}(\mu_1|\m_{\mathcal H}).$$
This contradicts null displacement convexity.
\\
\\
{\bf 3.} Finally we treat (2). 

For this we choose $\lambda=-\frac{1}{N-n+1}\langle \nabla V(p), v\rangle=-\frac{1}{N-n+1}\langle \nabla V(p), N(p)\rangle$. In particular 
$r\lambda= -\frac{1}{N-n+1}\langle \nabla V(p), \tilde K(p)\rangle$ since $rN(p) = \tilde K(p)$.

Provided $\eta>0$ and $r>0$ are small enough, 
by continuity  of $\ric^V$ and $(x,t)\mapsto \gamma_x(t)$  we have 
\begin{itemize}
\item[(1)]
${\displaystyle \ric^V(\gamma_x'(t), \gamma_x'(t))- \frac{1}{N-n+1}\langle \nabla V\circ \gamma_x(t), \gamma_x'(t)\rangle^2\leq - 5 \varepsilon}$
\smallskip\\
$\forall x\in B_\eta(p)\  \forall t\in [0,1] $
\end{itemize}
and 
by continuity of $(x,t)\mapsto A_x(t)$ and $(x,t)\mapsto \langle \nabla V\circ \gamma_x(t), \gamma_x'(t)\rangle $ we have 
\begin{itemize}
\item[(2)] ${\displaystyle 
\left| \mbox{tr} U_x(t) - \langle \nabla V\circ \gamma_x(t), \gamma_x'(t)\rangle\right| \leq C \ \  \forall x\in B_\eta(p)\  \forall t\in [0,1] }$
\end{itemize}
for some constant $C>0$. 

Now we choose $\epsilon' \leq \min\{ \frac{N}{2C} \epsilon, \frac{N(N-n)}{n},  N\}$ and 
since $(x,t)\rightarrow U_x(t)$ and $(x,t)\mapsto \langle \nabla V\circ \gamma_x(t), \gamma_x'(t)\rangle $ are continuous, we can choose $\eta$ and $r$ even smaller such that
\smallskip
\begin{itemize}
\item[(3)]  
${\displaystyle \mbox{tr}U^2_x(t) \leq \mbox{tr}U^2_x(0)+\varepsilon', }$
\medskip
\item[(4)] ${\displaystyle \left| \mbox{tr}U_x(0)-\mbox{tr}U_x(t)\right|\leq \varepsilon'} $
\medskip
\item[(5)] ${\displaystyle \left|
\langle \nabla V(p), v\rangle -\langle \nabla V\circ \gamma_x(t), \gamma'_x(t)\rangle \right| \leq   \varepsilon'.}$
\end{itemize}
$\forall x\in B_\eta(p)\  \forall t\in [0,1]$ 
where, again, \begin{align*}
 (n-1) r^2 \lambda^2 = \mbox{tr}U^2_x(0)= (\mbox{tr}U_x(0))^2 \ \mbox{ and }\ (N-n+1) r \lambda = -  \langle \nabla V(p), \tilde K(p)\rangle.
\end{align*}
\noindent
With this 
we compute first, using (1) and (3),
\begin{align*}
&\mbox{tr}(U_x)'(t) -( V\circ \gamma_x)''(t)
=- \mbox{tr}(U_x^2)(t) - \ric^V_x(\gamma'_x(t),\gamma_x'(t))\\
&\geq - \frac{1}{n-1} \left( (n-1)^2 r^2 \lambda^2\right) - \frac{1}{N-n+1} \left( \langle \nabla V\circ \gamma_x(t), \gamma'_x(t)\rangle \right)^2 + 4\varepsilon  = (\star)
\end{align*}
Next we use
\begin{itemize}
\item[(6)] ${\displaystyle \frac{1}{N}(a+b)^2 + \frac{n}{N(N-n)} \left( b- a\frac{N-n}{n}\right)^2 = \frac{1}{n} a^2 + \frac{1}{N-n} b^2.}$
\end{itemize}
and obtain 
\begin{align*}
(\star)\overset{(6)}{=}& - \frac{1}{N} \left( (n-1) r \lambda - \langle \nabla V\circ \gamma_x(t), \gamma'_x(t)\rangle\right)^2 \\
&- \frac{n}{N(N-n)} \left( \langle \nabla V\circ \gamma_x(t), \gamma_x'(t)\rangle+ (N-n+1) r \lambda\right)^2+ 4 \varepsilon\\
\overset{(4)+(5)}{\geq}& - \frac{1}{N} \left( \mbox{tr} U_x(t) - \langle \nabla V\circ \gamma_x(t), \gamma_x'(t)\rangle \pm \varepsilon'\right)^2 -\underbrace{ \frac{n}{N(N-n)}( \varepsilon')^2}_{\leq \epsilon}+ 4 \varepsilon \\
=& - \frac{1}{N}  \left( \mbox{tr} U_x(t) - \langle \nabla V\circ \gamma_x(t), \gamma_x'(t)\rangle \right)^2\\
&-\underbrace{\frac{2}{N} \left( \mbox{tr} U_x(t) - \langle \nabla V\circ \gamma_x(t), \gamma_x'(t)\rangle \right)\varepsilon'}_{\overset{(2)} {\leq}\frac{2}{N} C \epsilon'\leq \epsilon}- \underbrace{ \frac{1}{N} (\varepsilon')^2}_{\leq \epsilon} + 3 \varepsilon \\
=& - \frac{1}{N}  \left( \mbox{tr} U_x(t) - (V\circ \gamma_x)'(t)\right)^2+\varepsilon
\end{align*}
Consequently 
$$\mbox{tr}(U_x)' -( V\circ \gamma_x)''+\frac{1}{N}  \left( \mbox{tr} U_x(t) - (V\circ \gamma_x)'(t)\right)^2 > 0.$$
and if we set $\mbox{tr}U_x-V\circ \gamma_x= z_x$, it follows
$$
\left(({z_x})^{\frac{1}{N}}\right)'' > 0.$$
Exactly as before it follows 
$$
S_N(\mu_t|\m_{\mathcal H}) > (1-t) S_N(\mu_0|\m_{\mathcal H}) + t S_N(\mu_1|\m_{\mathcal H})
$$
that is a contradiction.
\end{proof}
\begin{definition}\label{def:syn}
A weighted space-time $(M^{n+1},g, e^{-V})$  with $V\in C^0(M)$ satisfies the synthetic $N$-null energy condition for some $N\geq n-1$ if  for every null hypersurface $\mathcal H$ and for every $\mu_0, \mu_1\in \mathcal P(M, \m_{\mathcal H})$ that are null connected via a null coupling $\pi$, it holds 
$$S_{N'}(\mu_t|\m_{\mathcal H})\leq (1-t) S_{N'}(\mu_0|\m_{\mathcal H}) + t S_{N'}(\mu_1|\m_{\mathcal H}) \ \forall N'>N$$
where $\mu_t$ is the corresponding null displacement interpolation. 
\end{definition}
\subsection{Localisation}
\begin{proposition} 
Consider a weighted, time-oriented Lorentzian manifold $(M^{n+1},g, e^{-V})$. Then  the synthetic $N$-null energy condition for  $N\geq n-1$ holds if and only if for every $\mu_0, \mu_1\in \mathcal P(M, \m_{\mathcal H})$ that are acausal and null connected via  a  dynamical null coupling $\Pi$ supported on a null hypersurface $\mathcal H$ one has for every $ t\in [0,1]$ that
\begin{align}\label{local}\rho_t^{-\frac{1}{N}}(\tilde \gamma_t)\geq (1-t)\rho_0^{-\frac{1}{N}}(\tilde \gamma_0) + t \rho_1^{-\frac{1}{N}}(\tilde \gamma_1) \mbox{ for } \Pi\mbox{-almost every }\tilde \gamma\in \mathcal G(\mathcal H)\end{align}
where $\rho_t$ is the density of the $\m_{\mathcal H}$-absolutely continuous part of  the measures $\mu_t$ of the corresponding null displacement interpolation.
\end{proposition}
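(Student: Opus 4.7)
The plan is to proceed via a standard localization argument: by restricting the dynamical coupling $\Pi$ to arbitrary measurable subsets of $\mathcal G(\mathcal H)$ and cancelling the resulting normalization, integrated convexity of $S_{N'}$ yields a pointwise inequality along $\Pi$-a.e.\ generator, and conversely integrating the pointwise inequality against $\Pi$ recovers $S_{N'}$-convexity.

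For the easy direction ($\Leftarrow$), I would first raise both sides of \eqref{local} to the power $N/N'$ for $N' \geq N$; since $x \mapsto x^{N/N'}$ is concave and non-decreasing on $[0,\infty)$, this yields the analogous pointwise inequality with exponent $1/N'$. Then, using $\mu_t = (e_t)_\#\Pi$ and $d\mu_t = \rho_t\, d\m_{\mathcal H}$, one rewrites
\begin{equation*}
S_{N'}(\mu_t|\m_{\mathcal H}) = -\int \rho_t^{-1/N'}(\tilde\gamma_t)\, d\Pi(\tilde\gamma),
\end{equation*}
and integrating the pointwise $1/N'$-inequality against $\Pi$, together with the identities $(e_i)_\#\Pi = \mu_i$, gives the synthetic $N$-null energy condition.

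For the hard direction ($\Rightarrow$), I fix $N' > N$ and an arbitrary measurable $A \subset \mathcal G(\mathcal H)$ with $\Pi(A) \in (0,1]$, and consider the restricted dynamical coupling $\Pi^A := \Pi(A)^{-1}\mathbf{1}_A \Pi$ with marginals $\mu_i^A := (e_i)_\#\Pi^A$. These remain $\m_{\mathcal H}$-absolutely continuous, supported on the original acausal spacelike submanifolds $\Sigma_0, \Sigma_1$ (hence acausal), and null connected through $(e_0,e_1)_\#\Pi^A$, so the synthetic hypothesis applies to the pair $(\mu_0^A, \mu_1^A)$. Using the diffeomorphism property of $T_t$ from Lemma \ref{lem:optimalmap} and the fact that $\Lambda_t$ is a $\mu_t$-a.e.\ inverse of $e_t$, a change of variables yields
\begin{equation*}
S_{N'}(\mu_t^A|\m_{\mathcal H}) = -\Pi(A)^{1/N'-1}\int_A \rho_t^{-1/N'}(\tilde\gamma_t)\, d\Pi(\tilde\gamma).
\end{equation*}
Invoking the synthetic $N$-NEC on $(\mu_0^A, \mu_1^A)$ and cancelling the positive factor $\Pi(A)^{1/N'-1}$ (the overall minus sign reverses the inequality) produces
\begin{equation*}
\int_A \rho_t^{-1/N'}(\tilde\gamma_t)\, d\Pi \geq (1-t)\int_A \rho_0^{-1/N'}(\tilde\gamma_0)\, d\Pi + t\int_A \rho_1^{-1/N'}(\tilde\gamma_1)\, d\Pi,
\end{equation*}
and arbitrariness of $A$ gives pointwise $1/N'$-concavity $\Pi$-a.e.

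To recover \eqref{local} with the stated exponent $1/N$, I would choose a sequence $N'_k \downarrow N$, intersect the countably many $\Pi$-full measure sets to obtain a full measure set on which all $1/N'_k$-inequalities hold, and pass to the limit: since $\rho_i(\tilde\gamma_i) \in (0,\infty)$ for $\Pi$-a.e.\ $\tilde\gamma$, continuity of $N' \mapsto \rho^{-1/N'}$ delivers \eqref{local}. I expect the main subtlety to be the calculation of $S_{N'}(\mu_t^A|\m_{\mathcal H})$ above, which requires verifying that $e_t|_{\supp \Pi}$ is injective (so that $\mathbf{1}_A(\Lambda_t(y))$ is well-defined $\mu_t$-a.e.) and that $\rho_t^A(y) = \Pi(A)^{-1}\mathbf{1}_A(\Lambda_t(y))\rho_t(y)$; injectivity follows from uniqueness of the flow curve of $K$ through $y$ together with the fact that this curve meets each acausal hypersurface $\Sigma_0, \Sigma_1$ at most once.
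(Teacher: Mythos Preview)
Your proof is correct, and in the hard direction it takes a genuinely different (and arguably cleaner) route than the paper.

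For the easy direction you do exactly what the paper does: pass from exponent $1/N$ to $1/N'$ via concavity of $x\mapsto x^{N/N'}$ (the paper phrases this as ``H\"older's inequality'') and integrate against $\Pi$.

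For the hard direction the paper localizes differently. It fixes $\tau\in[0,1]$ and a point $x\in S_\tau$, restricts $\Pi$ to $\Gamma=\{\tilde\gamma:\tilde\gamma(\tau)\in B_\delta(x)\}$, and uses only the \emph{inequality} $\rho'_t\le \Pi(\Gamma)^{-1}\rho_t$ at the endpoints together with Jensen's inequality at time $\tau$ to sandwich $\m_{\mathcal H}(B_\delta(x))^{1/N'}$ against an averaged right-hand side; it then divides by $\mu_\tau(B_\delta(x))^{1/N'}$ and lets $\delta\downarrow 0$ at Lebesgue density points. Your argument instead exploits the exact identity $\rho_t^A=\Pi(A)^{-1}\mathbf 1_A(\Lambda_t(\cdot))\,\rho_t$, valid here because $e_t$ is $\Pi$-a.e.\ injective (equivalently, $\Lambda_t$ exists, which the paper establishes in its setup). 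This gives the closed formula for $S_{N'}(\mu_t^A|\m_{\mathcal H})$ at every $t$, so the normalization factor cancels uniformly and the pointwise inequality drops out from arbitrariness of $A$ without any density-point or Jensen step. The paper's approach is more robust in situations where one only has $\rho'_t\le \Pi(\Gamma)^{-1}\rho_t$; your approach is shorter once the map $\Lambda_t$ is available. You also make explicit the limit $N'_k\downarrow N$ needed to reach the stated exponent $1/N$, a step the paper leaves implicit.
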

\begin{proof} By H\"older's inequality it is possible to replace $N$ in \eqref{local} with $N'>N$. 
Integrating \eqref{local} w.r.t. the dynamical null coupling $\Pi$ that comes from $\pi$ yields 
$$S_{N'}(\mu_t|\m_{\mathcal H})\leq (1-t) S_{N'}(\mu_0|\m_{\mathcal H}) + t S_{N'}(\mu_1|\m_{\mathcal H}) \ \forall N'>N.$$

Let us now assume the synthetic $N$-null energy condition. Let $(\mu_t)_{t\in [0,1]}$  be a null displacement interpolation between two acausal measures $\mu_0, \mu_1\in \mathcal P(\mathcal H, \m_{\mathcal H})$ supported on $S_0$ and $S_1$.  $\mu_t$ is induced by a $C^1$ diffeomorphism $T_t: S_0\rightarrow \mathcal H$ and $T_t(S_0)=S_1$ is a spacelike $C^1$ submanifold.  We fix $\tau\in [0,1]$ and $x\in S_\tau$, and we define
\begin{align*}
\Gamma= \{ \tilde \gamma\in \mathcal G(\mathcal H): \tilde \gamma(t)\in B_{\delta}(x)\subset S_\tau\}
\end{align*}
where $B_\delta(x)$ is the ball of radius $\delta>0$ in $S_t$. Assume $\Pi(\Gamma)>0$ and set $\Pi'= \frac{1}{\Pi(\Gamma)} \Pi|_{\Gamma}$.  In particular $\mu_\tau(B_\delta(x))= \Pi(\Gamma). $$(e_t)_{\#}\Pi'= \mu_t'$ is still a null displacement interpolation that is  induced by the same family  of maps $T_t$. Moroever, for $B\subset S_t$
$$\mu_t'(B)= \Pi'(e_t^{-1}(B))= \frac{1}{\Pi(\Gamma)} \Pi|_\Gamma(e_t^{-1}(B))\leq \frac{1}{\Pi(\Gamma)} \Pi(e_t^{-1}(B))= \frac{1}{\Pi(\Gamma)} \mu_t(B).$$
Hence 
$\rho_t'\leq \frac{1}{\Pi(\Gamma)}\rho_t$ and $\rho_\tau'= \frac{1}{\Pi(\Gamma)}\rho_t|_{B_\delta(x)}.$ 
The synthetic $N$-null energy condition yields
\begin{align*}
&\m_{\mathcal H}(B_\delta(x))^{\frac{1}{N'}}\geq \int (\rho_\tau')^{-\frac{1}{N'}} d\mu'_\tau\\
&\geq \int \left[(1-t) (\rho_0')^{-\frac{1}{N'}}(e_0\circ \Lambda_\tau(x)) + t (\rho_1')^{-\frac{1}{N}}(e_1\circ \Lambda_\tau(x))\right] d\mu'_\tau(x)\\
&\geq \Pi(\Gamma)^{\frac{1}{N'}-1}\int_{B_\delta(x)}\left[(1-t) \rho_0^{-\frac{1}{N'}}(e_0\circ \Lambda_\tau(x)) + t \rho_1^{-\frac{1}{N}}(e_1\circ \Lambda_\tau(x))\right] d\mu_\tau(x)
\end{align*}
where the first inequality follows from Jensen's inequality.
Now let $x$ be a  density point of the measure $\mu_t$ w.r.t. $\m_{\mathcal H}$ and also  a density point of $(1-t) \rho_0^{-\frac{1}{N'}}(e_0\circ \Lambda_\tau(x)) + t \rho_1^{-\frac{1}{N}}(e_t\circ \Lambda_\tau(x))=:F(x)$ w.r.t. $\mu_\tau$.  Here we use that $F(x)$ is in $L^1(\mu_\tau)$. We divide the previous inequality by $\mu_\tau(B_\delta(x))^{\frac{1}{N'}}$. Then we take $\delta \downarrow 0$. It follows
$$
\rho_\tau^{\frac{1}{N'}}(x)\geq (1-t) \rho_0^{-\frac{1}{N'}}(e_0\circ \Lambda_\tau(x)) + t \rho_1^{-\frac{1}{N'}}(e_t\circ \Lambda_\tau(x)) 
$$ for $\mu_\tau$-almost every $x\in S_{\tau}$.
Since $\Pi= (\Lambda_\tau)_{\#}\mu_\tau$, our claim follows. 
\end{proof}
\begin{remark}
From Lemma \ref{lem:ma} we know that 
$$\rho_t(\gamma_t(x))= \rho_t(T_t(x))= \rho_0(x)\det DT_t(x) e^{V(x)-V\circ T_t(x)}.$$
Hence $t\in[0,1]\mapsto \rho_t(\gamma_t(x))$ is continuous. Then the previous proposition implies that $\mu_0$-almost every $x\in S_0$ we have that $$\rho_t(\gamma_t)^{-\frac{1}{N}}\geq(1-t) \rho_0(\gamma_0)^{-\frac{1}{N}}+ t\rho_1(\gamma_1)^{-\frac{1}{N}}.$$
and by replacing $T_t$ with $T_{(1-s)t_0+st_1}$ for $t_0, t_1\in [0,1]$ one can conclude that $\left(\rho_t(\gamma_t)^{-\frac{1}{N}}\right)''\leq 0$ in distributional sense on $(0,1)$.
\end{remark}
\section{Applications}
\subsection{Hawking area monotonicity}
\begin{definition} 
Let $\mathcal H$ be a null hypersurface and let $K: \mathcal H\rightarrow T\mathcal H$ be the normal $C^1$ null vector field as in Subsection \ref{subsec:null}.  We call $\mathcal H$  future null complete, if $\exp_x(tK(x))\in \mathcal H$ for all $t\in[0, \infty)$ and for all $x\in \mathcal H$. 

The Lorentzian manifold $(M,g)$ is called future null complete if $\exp(tv)$ is defined for all $t\in [0,\infty)$ for every future-directed null vector.
\end{definition}
\begin{remark}
Let $\mathcal H$ be future null complete and $S\subset \mathcal H$  an acausal spacelike submanifold. Let $\tilde K$ be a $C^1$ null vectorfield along $S$. Then $T_t: S\rightarrow \mathcal H$ with $T_t(x) = \exp_x(t\tilde K(x))$ is a $C^1$ diffeomorphism for all $t\in [0, \infty)$ and for a probability measure $\mu$ on $S$, $(T_t)_{\#}\mu= \mu_t$ is a  probability measure on $S_t$ for all $t\in [0, \infty)$. 
\end{remark}
\begin{theorem}[Hawking Monotonicity] Let $(M^{n+1},g, e^{-V})$ be a  weighted space-time that satisfies the synthetic $N$-null energy condition for $N\geq n-1$ and let $\mathcal H\subset M$ be a null hypersurface.
Assume $\mathcal H$ is future null complete. Let $\Sigma_0, \Sigma_1\subset M$ two acausal spacelike hypersurfaces. Define $\Sigma_i\cap \mathcal H= S_i$, $i=0,1$, and assume $S_0\subset J^-(S_1)$.  Then 
$$\m_{\mathcal H}(S_0)\leq \m_{\mathcal H}(S_1).$$
\end{theorem}
\begin{proof} 
Assume there exist $S_0$ and $S_1$ such that 
$$\m_{\mathcal H} (S_0) > \m_{\mathcal H}(S_1).$$
By Example \ref{example} there exists a map $T: S_0\rightarrow S_1$ given by $T(x)= \exp_x(\tilde K(x))$ for a $C^1$ null vector field along $S_0$. Since $\mathcal H$ is future complete, the family of maps $T_t(x)= \exp_x(t \tilde K(x))$ are $C^1$ diffeomorphisms for all $t\in (0, \infty)$ \cite[Lemma 4.15]{chdegaho} and $T_t(S_0)= S_t$ are spacelike and acausal hypersurfaces  in $\mathcal H$ of codimension $2$ in $M$. 

Let $N'>N$. We define $\mu_0= \frac{1}{\m_{\mathcal H}(S_0)}\m_{\mathcal H}|_{S_0}$ and $(T_t)_{\#}\mu_0=:\mu_t$. It follows  that 
$$-S_{N'}(\mu_0|\m_{\mathcal H})= \m_{\mathcal H} (S_0)^{\frac{1}{N'}}$$
and by Jensen inequality also 
$$-S_{N'}(\mu_1|\m_{\mathcal H}) \leq \m_{\mathcal H}(\supp \mu_1)^{\frac{1}{N'}}\leq \m_{\mathcal H}(S_1)^{\frac{1}{N'}}.$$
Hence 
$$-S_{N'}(\mu_0|\m_{\mathcal H})> - S_{N'}(\mu_1|\m_{\mathcal H}).$$
Now, pick $t_0\gg 1$ and let $\hat T_{\tau}(x)= \exp_x(\tau t_0 \tilde K(x))$ and define $\nu_{\tau}:= (\hat T_{\tau})_{\#} (\mu_0)$ for $\tau\in [0,1]$.  For $\tau_1=\frac{1}{t_0}$ we have $\nu_{\tau_1}= \mu_1$. 

It follows
\begin{align*}
-S_{N'}(\mu_0|\m_{\mathcal H})&> -S_{N'}(\nu_{\tau_1}|\m_{\mathcal H})\\
& \geq - (1-\tau_1) S_{N'}(\nu_0|\m_{\mathcal H}) - \tau_1 S_{N'}(\nu_1|\m_{\mathcal H}).
\end{align*}
For $t_0\gg 1$ large enough we have a contradiction. 
\end{proof}
\subsection{Penrose singularity theorem}
\begin{definition}\label{def:fuco}
Let  $(M^{n+1}, g, e^{-V})$ be a weighted space-time and let  $\mathcal H\subset M$ be a null hypersurface. Let $\Sigma\subset M$ be a acausal spacelike hypersurface and we set $\Sigma\cap \mathcal H= S$.  For every $p\in S$ there exists a geodesic ball $B_\eta(p)\subset S$ w.r.t. $g|_S$ and a null vectorfield $\tilde K: S\rightarrow T\mathcal H$  such that $x\in S\mapsto T_t(x)= \exp_x(\tilde K(x))$ is a $C^1$ diffeomorphism for all $t\in (0,t_p)$ and for some $t_p>0$.
\smallskip\\
We say $S$ is  {\it synthetically future converging in $\mathcal H$} if for every $p\in S$ there exists $\eta>0$, $T_t$ as above and $\epsilon(p)\in (0,1)$ s.t. 
$$
\underline{\frac{d}{dt}}\log \m_{\mathcal H}(T_t(B_\delta (p)))|_{t=0} \leq  -\epsilon(p)<0
$$
for any  $\delta\in (0, \eta)$ where $\underline{\frac{d}{dt}}f(t):= \liminf_{h\downarrow 0} \frac{f(t+h)-f(t)}{h}$. 

\end{definition}
\begin{remark}
This definition of {\it  synthetically future converging in $\mathcal H$} is motivated by related definitions of  mean curvature lower bounds for the boundary of a subset in a metric measure space that satisfies a synthetic Ricci curvature bound or for measured Lorentzian length spaces (see \cite{kettererhk, ketterermean, bukemcwo, camoreview, camolorentz}).
\end{remark}

\begin{lemma} 
Let  $(M^{n+1}, g, e^{-V})$ be a weighted space-time with $V\in C^{\infty}(M)$ and let  $\mathcal H\subset M$ be a null hypersurface. Let $\Sigma\subset M$ be an acausal and spacelike hypersurface.  $\Sigma\cap \mathcal H= S$ is synthetically future converging in $\mathcal H$ if and only if 
$$ H_{V, w}(p):= H_w(p)+\langle \nabla V(p), w\rangle>0, \ p\in S$$
and for every future-directed null vector $w\in T_p\mathcal H$ where $H_w(p)= \langle {\bf H}(p), w\rangle$ and ${\bf H}$ is the mean curvature vector of $S$. 
We call $H_{V, w}$ the weighted mean curvature in direction of $w$. 
\end{lemma}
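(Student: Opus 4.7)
My plan is to compute $\partial_t|_{t=0}\log \m_{\mathcal H}(T_t(B_\delta(p)))$ via the area formula, then pass to the limit $\delta\downarrow 0$ to extract the pointwise weighted mean curvature from the left-hand side of the defining inequality.

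First, by the change-of-variables formula for the $C^1$ diffeomorphism $T_t$,
\begin{equation*}
\m_{\mathcal H}(T_t(B_\delta(p))) = \int_{B_\delta(p)} e^{-V\circ T_t(x)}\det DT_t(x)\,d\vol_{\mathcal H}(x).
\end{equation*}
Differentiating under the integral at $t=0$, using $DT_0=I$, the identity $\partial_t|_{t=0}\det DT_t(x)=\mathrm{tr}(U_x(0))$, and the Jacobi-field relation $U_x(0)v=\nabla_v\tilde K$ from the proof of Theorem \ref{mainforw}, reduces the logarithmic derivative to the $\m_{\mathcal H}$-average of $\mathrm{tr}(U_x(0))-\langle \nabla V(x),\tilde K(x)\rangle$ over $B_\delta(p)$.

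Second, I would identify this integrand pointwise with $-H_{V,\tilde K(x)}(x)$. The key computation is $\mathrm{tr}(U_p(0))=-H_{\tilde K(p)}(p)$: for an orthonormal basis $e_1,\dots,e_{n-1}$ of $T_pS$, differentiating $\langle e_i,\tilde K\rangle\equiv 0$ tangentially along $S$ yields $\langle e_i,\nabla_{e_i}\tilde K\rangle=-\langle\nabla_{e_i}e_i,\tilde K(p)\rangle$, and $\tilde K(p)$ lies in the Lorentzian orthogonal complement of $T_pS$ inside $T_pM$ because $T_p\mathcal H$ is null-degenerate along its null direction. Summing and recognising the mean curvature vector ${\bf H}(p)$ gives the claim. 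Lebesgue differentiation (applied to the continuous integrand) then produces
\begin{equation*}
\lim_{\delta\downarrow 0}\partial_t|_{t=0}\log \m_{\mathcal H}(T_t(B_\delta(p))) = -H_{V,\tilde K(p)}(p).
\end{equation*}

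Both directions of the equivalence now follow. For sufficiency, given $H_{V,w}(p)>0$ for all future-directed null $w$, pick any $C^1$ future-directed null vector field $\tilde K$ along a neighbourhood of $p$ in $S$; continuity of the integrand bounds the logarithmic derivative above by $-\tfrac{1}{2}H_{V,\tilde K(p)}(p)$ uniformly in $\delta\in(0,\eta)$ for $\eta$ small. For necessity, passing $\delta\downarrow 0$ in the liminf bound from the synthetic condition forces $H_{V,\tilde K(p)}(p)\geq\epsilon(p)>0$; since $T_p\mathcal H$ carries a single future-directed null line and $w\mapsto H_{V,w}(p)$ is linear, this upgrades to positivity for every future-directed null vector. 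The only delicate step is the sign identification $\mathrm{tr}(U_p(0))=-H_{\tilde K(p)}(p)$, which hinges on $\tilde K(p)$ being Lorentz-orthogonal to $T_pS$ — a special feature of null hypersurfaces with no analogue in the Riemannian setting.
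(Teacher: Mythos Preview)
Your proposal is correct and follows essentially the same route as the paper: compute $\m_{\mathcal H}(T_t(B_\delta(p)))$ via the area formula, differentiate at $t=0$ to obtain the $\m_{\mathcal H}$-average of $-H_{V,\tilde K(x)}(x)$ over $B_\delta(p)$, and shrink $\delta\downarrow 0$ for necessity while invoking continuity of the integrand for sufficiency. The only cosmetic difference is that you justify the sign identity $\mathrm{tr}\,U_p(0)=-H_{\tilde K(p)}(p)$ by differentiating the orthogonality relation $\langle e_i,\tilde K\rangle\equiv 0$, whereas the paper simply writes $\sum_i\langle \nabla_{e_i}\tilde K,e_i\rangle=-\langle {\bf H},\tilde K\rangle$ without spelling out this step.
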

\begin{proof} We assume first that $S$ is future converging in $\mathcal H$ like in Definition \ref{def:fuco}. 
By the area formula we have 
$$
\m_{\mathcal H} (T_t(B_\delta(p)))= \int_{B_\delta(p)} e^{-V\circ T_t(x)} \det DT_t(x) d\vol_{\mathcal H}(x).
$$
Differentiating this  formula at $t=0$ yields 
\begin{align*}
&\frac{d}{dt}\Big|_{t=0}\m_{\mathcal H} (T_t(B_\delta(p)))= \int_{B_\delta(p)} 
\frac{d}{dt}\Big|_{t=0}e^{-V\circ T_t(x)} \det DT_t(x) d\vol_{\mathcal H}(x)\\
&= \int_{B_\delta(p)} \left(\mbox{tr} \frac{d}{dt}|_{t=0} DT_t(x) - \langle \nabla V(x), \tilde K(x)\rangle \right)d\m_{\mathcal H}(x).
\end{align*}
Moreover 
 \begin{align*}\mbox{tr} \frac{d}{dt}\Big|_{t=0} DT_t(x) = \mbox{tr} D \tilde K(x)&= \sum_{i=1}^{n-1} \langle \nabla e_i \tilde K (x), e_i\rangle\\
&= - \langle {\bf H}(x), \tilde K(x)\rangle=-H_{\tilde K(x)}(x)
\end{align*} for alle $x\in B_\delta(p)$ where $e_1, \dots, e_{n-1}$ is  an orthonormal  bases in $T_xS$.

Hence 
\begin{align*}
0>-\epsilon(p)&\geq \frac{d^-}{dt}\log \m_{\mathcal H}(T_t(B_\delta (p)))|_{t=0}\\
&= \frac{1}{\m_{\mathcal H}(B_\delta(p))} \frac{d}{dt} \m_{\mathcal H}(T_t(B_\delta (p)))|_{t=0}\\
& = \frac{1}{\m_{\mathcal H}(B_\delta(p))} \int_{B_\delta(p)} \left( -H_{\tilde K(x)}(x)- \langle \nabla V(x), \tilde K(x)\rangle \right) d\m_{\mathcal H}(x).
\end{align*}
If $\delta\downarrow 0$, it follows 
\begin{align}\label{meancurv}
H_{\tilde K(p)}(p)+ \langle \nabla V(p), \tilde K(p)\rangle>0.
\end{align}
For every future-directed null vector $w\in T_p\mathcal H$ there exists $\lambda\in \R$ such that $\lambda \tilde K(p)=w$. It follows that $H_{V,w}(p)>0$. 

If we assume \eqref{meancurv}, then for every $p\in S$ we find $\eta>0$ and $\epsilon(p)>0$, such that 
$$
H_{\tilde K(x)}(x)+ \langle \nabla V(x), \tilde K(x)\rangle\geq \epsilon(p) \ \mbox{ for } x\in B_\delta(p) \mbox{ and  for all } \delta\in (0, \eta)
$$
where $\tilde K$ is the vector field that we find according to Definition \ref{def:fuco}.
With the previous computations we see that $S$ is future convergin in $\mathcal H$. 
\end{proof}

\begin{remark} 
The  definition of {\it future converging}   for a codimension two submanifold  $S$ in a Lorentzian manifold $(M,g)$ is that $\langle {\bf H}, v\rangle>0$ for every future directed null vector $v$ normal to $S$ \cite[Chapter 14]{oneillsemi}. 
 \end{remark}
For a weighted space-time  we make the following definition.
\begin{definition}\label{def:trapped}
Let $(M,g,e^{-V}\vol_g)$ be a weighted Lorentzian manifold with $V\in C^{\infty}(M)$.
We call a codimension two submanifold $S\subset M$    Bakry-Emery future converging (or Bakry-Emery trapped) if 
\begin{align*}
\langle {\bf H}, v\rangle + \langle \nabla V, v\rangle >0 
\end{align*}
for every future-directed normal null vector at $S$. 

If $V\in C^0(M)$, consider $\mathcal C$, $\underline {\mathcal C}$, $\mathcal H$, $\underline {\mathcal H}$, $L$ and $\underline{L}$ as in Example \ref{ex:nullcones}.  We call the  codimension two submanifold $S\subset M$    synthetically  future converging (or synthetically trapped) if $S$ is future converging in $\mathcal H$ and future converging in ${\underline{\mathcal H}}$. 
\end{definition}
\begin{corollary}\label{cor:penrose} Consider a weighted space-time  $(M,g,e^{-V})$ that satisfies the synthetic $N$-null energy condition and let $\mathcal H$ be a null hypersurface.
Let $\Sigma\subset M$ be an acausal spacelike hypersurfaces and define $\Sigma\cap \mathcal H= S$. Assume $S$ is future converging in $\mathcal H$. Then $\mathcal H$ is not future null complete. 
\end{corollary}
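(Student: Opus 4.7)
The plan is to argue by contradiction: assume $\mathcal H$ is future null complete and then produce a focal point of the null flow issuing from $S$ at finite affine parameter, contradicting the standard fact that future null completeness of $\mathcal H$ forbids any such focal points.

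First, by the preceding lemma the hypothesis that $S$ is future converging in $\mathcal H$ is equivalent to strict positivity of the weighted mean curvature: $H_{V,\tilde K(p)}(p) > 0$ for a smooth normal null vector field $\tilde K$ along $S$ as in Definition \ref{def:fuco}. Fix $p \in S$ and use continuity to pick $\eta > 0$ and a constant $c > 0$ with $H_{V,\tilde K(x)}(x) \geq c$ for every $x \in B_\eta(p)$. Future null completeness of $\mathcal H$ makes the flow $T_t(x) = \exp_x(t\tilde K(x))$ well defined and contained in $\mathcal H$ for all $t \in [0,\infty)$, and because the null generators of $\mathcal H$ then have no endpoints, Lemma 4.15 of \cite{chdegaho} ensures that no focal points occur along $t \mapsto T_t(x)$; in particular every $T_t$, $t \geq 0$, is a $C^1$ diffeomorphism from a neighborhood of $p$ in $S$ onto its image inside $\mathcal H$.

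For $\delta \in (0,\eta)$ take $\mu_0 = \m_{\mathcal H}(B_\delta(p))^{-1}\m_{\mathcal H}|_{B_\delta(p)}$ and $\mu_t = (T_t)_\#\mu_0$. By Lemma \ref{lem:ma} the densities satisfy
$$
f_x(t) := \rho_t(T_t(x))^{-1/N'} = \rho_0(x)^{-1/N'}(\det DT_t(x))^{1/N'} e^{(V(x) - V\circ T_t(x))/N'},
$$
which is positive and smooth in $t$ for every $x$ in the support of $\mu_0$. The localization proposition together with the remark following it (where the reparametrization $T_t \mapsto T_{(1-s)t_0+st_1}$ propagates the $[0,1]$-concavity to every compact subinterval) promotes the synthetic $N$-null energy condition to the statement that $t \mapsto f_x(t)$ is concave on $[0,\infty)$, for every $N' > N$ and $\mu_0$-almost every $x$.

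Differentiating at $t=0$ and using the identity $\mathrm{tr}(D\tilde K) = -H_{\tilde K}$ established in the proof of the preceding lemma yields $f_x'(0) = -\frac{1}{N'}H_{V,\tilde K(x)}(x)\, f_x(0) \leq -\frac{c}{N'}f_x(0) < 0$. Concavity together with this strictly negative initial slope forces $f_x(t) \leq f_x(0) + t f_x'(0)$ and hence $f_x(t^*(x)) = 0$ for some $t^*(x) \leq N'/c < \infty$. But the explicit product formula for $f_x$ shows that $f_x(t)$ can vanish only when $\det DT_t(x) = 0$, i.e.\ at a focal point of the null flow within $\mathcal H$ at finite parameter, contradicting the no-focal-point property from the first step. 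The one delicate point I foresee is verifying that the concavity from the localization proposition genuinely propagates to all of $[0,\infty)$ for the $\mu_0$ chosen here; this requires that the affine reparametrizations of the flow used in the remark stay inside $\mathcal H$, which is precisely what future null completeness guarantees.
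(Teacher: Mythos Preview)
Your argument is sound, but it takes a different route from the paper's own proof and, as written, it forfeits some generality.

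The paper argues entirely at the \emph{integrated} entropy level. Assuming $\mathcal H$ is future null complete, one observes (by the same rescaling trick used in the Hawking Monotonicity Theorem) that $t\mapsto -S_{N'}(\mu_t|\m_{\mathcal H})$ is concave on $[0,\infty)$ and non-negative, hence non-decreasing. On the other hand, since $-S_{N'}(\mu_\tau|\m_{\mathcal H})\leq \m_{\mathcal H}(T_\tau(B_\delta(p)))^{1/N'}$ with equality at $\tau=0$, the future-converging hypothesis (in the form of Definition~\ref{def:fuco}, involving only a lower Dini derivative of the volume) forces this quantity to strictly decrease at $\tau=0$. These two facts contradict each other. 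No differentiation of $V$, and no appeal to the weighted mean curvature lemma, is needed.

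Your route instead passes through the localisation proposition to obtain \emph{pointwise} concavity of $t\mapsto \rho_t(T_t(x))^{-1/N'}$, and then differentiates this quantity at $t=0$ using the formula $f_x'(0)=-\tfrac{1}{N'}H_{V,\tilde K(x)}(x)f_x(0)$. This is essentially the content of the Remark the paper places \emph{after} the corollary, not the corollary's proof itself. The cost is that both the invocation of the preceding lemma (which is stated only for $V\in C^\infty(M)$) and your computation of $f_x'(0)$ require $V$ to be at least $C^1$. Since the synthetic $N$-null energy condition and the notion of synthetically future converging are formulated for $V\in C^0(M)$, and the subsequent Penrose theorem uses the corollary in that generality, your argument as written does not cover the full statement. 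The paper's entropy-level proof avoids this by never leaving the Dini-derivative formulation of Definition~\ref{def:fuco}. If you wish to keep your pointwise approach, you would need to replace the derivative $f_x'(0)$ by a suitable upper Dini derivative obtained directly from the volume condition, without passing through $H_{V,\tilde K}$.
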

\begin{proof}  For $p\in S$ we choose $\eta>0$ and $\tilde K$ as in Definition \ref{def:fuco}. We argue by contradiction. Assume $\mathcal H$ is future complete. It follows that $T_t(x) = \exp_x(t \tilde K(x)), x\in B_\eta(p),$ is a $C^1$ diffeomorphism for all $t\in (0,\infty)$.  
We define $\mu_0= \frac{1}{\m_{\mathcal H}}\m_{\mathcal H}|_{B_\delta(p)}$ and $\mu_t= (T_t)_{\#}\mu_0$. 
Since $\mathcal H$ is future complete, it follows, similarly as in the Hawking Monotonicity Theorem, that 
$$S_{N'}(\mu_0|\m_{\mathcal H})\geq S_{N'}(\mu_1|\m_{\mathcal H}).$$
for all $N'>N$. Then  it follows with the synthetic null energy condition
\begin{align*}
0&>-(N'-1) \m_{\mathcal H}(B_\delta(p))^{\frac{1}{N'}} \epsilon(p)\\
&\geq\liminf_{\tau\downarrow 0} \frac{1}{\tau} \left( \m_{\mathcal H}(\hat T_{\tau}B_\delta(p))^{\frac{1}{N'}} - \m_{\mathcal H}(B_\delta(p))^{\frac{1}{N'}}\right)\\
\ \ \ \ \ \ &\geq -S_{N'}(\nu_1|\m_{\mathcal H})+ S_{N'}(\nu_0|\m_{\mathcal H})\geq 0.
\end{align*}
This is a contradiction.
\end{proof}
\begin{remark}
We can refine the analysis of the previous proof as follows.

Let $p\in S$ and $\eta>0$ and $\tilde K$ as above such that $T_t(x) = \exp_x(t \tilde K(x)), x\in B_\eta(p),$ is a $C^1$ diffeomorphism for all $t\in (0,t_p)$ and for some $t_p>0$. 
We define $\mu_0= \frac{1}{\m_{\mathcal H}}\m_{\mathcal H}|_{B_\delta(p)}$ for $\delta\in (0, \eta)$ and $\mu_t= (T_t)_{\#}\mu_0$. 

Recall that area formula implies 
$$\m_{\mathcal H}(T_t(B_\delta(p))= \int_{B_\delta(p)} \det DT_t e^{-V\circ T_t} d\vol_{\mathcal H}= \int_{B_\delta(p)} \frac{\rho_0}{\rho_t\circ T_t} d\m_{\mathcal H}.$$
Hence  future converging in $\mathcal H$ yields
\begin{align*}-\epsilon(p)&\geq {\m_{\mathcal H}(B_\delta(p))}^{-1}\frac{d}{dt}\Big|_{t=0} \m_{\mathcal H}(T_t(B_\delta(p))\\
&\geq {\m_{\mathcal H}{(B_\delta(p))}^{-1}}\int_{B_\delta(p)} \frac{d}{dt} \Big|_{t=0}\frac{\rho_0}{\rho_t\circ T_t} d\m_{\mathcal H}.\end{align*}
and with $\delta\downarrow 0$ one gets
$$0>-\epsilon(p)\geq\frac{d}{dt}\Big|_{t=0}\frac{\rho_0(p)}{\rho_t\circ T_t(p)}=\frac{d}{dt}\Big|_{t=0}\log(\rho_t\circ T_t(p))^{-1} .$$ 
Since $t\mapsto (\rho_t\circ T_t(p))^{-\frac{1}{N'}}$ is concave,  we have $t_p<\frac{1}{\epsilon(x)}$ by Riccatti comparison. 
\end{remark}
Now, we will prove a Penrose Singularity theorem for the synthetic  $N$-null energy condition. 
\begin{theorem}
Let $(M,g,e^{-V})$ be a weighted space-time that is globally hyperbolic and $V\in C^0(M)$.  Assume there exists a non-compact Cauchy hypersurface $\Sigma\subset M$, $M$ contains a synthetically future converging, compact, oriented codimension two submanifold $S$ and the synthetic $N$-null energy condition for $N\geq n-1$ holds. Then $M$ is future null incomplete.
\end{theorem}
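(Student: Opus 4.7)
The plan is to run the classical Penrose incompleteness argument, where the synthetic $N$-null energy condition combined with the synthetic future-converging hypothesis supplies the quantitative focusing that in the smooth setting comes from the Raychaudhuri equation. Suppose for contradiction that $M$ is future null complete. I would then show that the future horismos $E^+(S) := J^+(S)\setminus I^+(S)$ is compact and project it along the Cauchy time flow to reach a contradiction with the non-compactness of $\Sigma$.

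First I would produce a uniform focusing time. By Example \ref{ex:nullcones} the two null-normal congruences issuing from $S$ locally generate null hypersurfaces $\mathcal H$ and $\underline{\mathcal H}$, and the synthetic future-converging hypothesis applies to both. The remark following Corollary \ref{cor:penrose} shows that for each $p\in S$ the affine parameter for which $T_t(x)=\exp_x(t\tilde K(x))$ can be kept a $C^1$-diffeomorphism onto a piece of $\mathcal H$ near $p$ is bounded above by $1/\epsilon(p)$, with an analogous statement on the inner side. Using compactness of $S$ together with (lower semi)continuity of the local weighted mean-curvature bound implicit in Definition \ref{def:fuco}, I would extract a uniform $T_\ast<\infty$ such that beyond affine parameter $T_\ast$ every null generator of $\mathcal H$ and $\underline{\mathcal H}$ emanating from $S$ has developed either a focal event or a crossing with another generator, and hence has left the achronal boundary $E^+(S)$.

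Second, under the standing assumption of future null completeness the previous step forces
$$E^+(S)\subset\exp\bigl(\{(p,sL_p):p\in S,\ 0\le s\le T_\ast\}\bigr)\cup\exp\bigl(\{(p,s\underline{L}_p):p\in S,\ 0\le s\le T_\ast\}\bigr).$$
The right-hand side is the continuous image of a compact set and $E^+(S)$ is closed in $M$, so $E^+(S)$ is compact. Global hyperbolicity provides a Cauchy time function $\tau:M\to\R$ and a homeomorphism $M\cong\Sigma\times\R$; the associated retraction $\pi:M\to\Sigma$ along the flow lines is continuous and, by achronality of $E^+(S)$, injective on $E^+(S)$. Standard facts give that $E^+(S)$ is a closed, achronal topological hypersurface without boundary in $M$, so $\pi(E^+(S))$ is a compact topological hypersurface without boundary in $\Sigma$. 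Invariance of domain then forces $\pi(E^+(S))$ to be open in $\Sigma$ as well, so it is a non-empty clopen subset of the connected hypersurface $\Sigma$, hence equal to $\Sigma$; this contradicts non-compactness of $\Sigma$.

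The main obstacle is the first step: extracting a genuinely uniform focusing time $T_\ast$ when the weight $V$ is merely $C^0$, and more subtly showing that the breakdown of the $C^1$-diffeomorphism at the local bound $1/\epsilon(p)$ corresponds to the generator actually entering $I^+(S)$ rather than only to a loss of smooth chart on $\mathcal H$. In the classical smooth case this is handled via the Jacobi-field picture of focal points; here I would instead use concavity of $t\mapsto (\rho_t\circ T_t(p))^{-1/N'}$ from the remark after Corollary \ref{cor:penrose}, combined with the localised Jensen-type argument driving that corollary, to conclude that the generator density must blow up (equivalently, two generators must meet) at or before $T_\ast$, which is exactly what pushes the generator off $E^+(S)$ into $I^+(S)$.
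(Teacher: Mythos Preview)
Your approach is essentially the same as the paper's: assume future null completeness, use the synthetic future-converging hypothesis together with the concavity estimate from the remark after Corollary~\ref{cor:penrose} to bound how long each null generator can stay on the achronal boundary, conclude that $\partial J^+(S)$ (equivalently your $E^+(S)$) is compact, and then project to the non-compact Cauchy hypersurface $\Sigma$ for a contradiction. Your version is in fact slightly more careful than the paper's in explicitly extracting a uniform $T_\ast$ from the compactness of $S$.

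Your self-flagged ``main obstacle'' is not a real obstacle, and the workaround you sketch is unnecessary. The weight $V$ is merely $C^0$, but the Lorentzian metric $g$ is smooth, so the classical Jacobi-field/focal-point machinery is fully available. The remark after Corollary~\ref{cor:penrose} shows $t_p<1/\epsilon(p)$; since by \cite[Lemma~4.15]{chdegaho} the map $T_t$ fails to be a diffeomorphism precisely when a focal point occurs, there is a genuine focal point along each generator before parameter $1/\epsilon(p)$. Standard smooth Lorentzian geometry (e.g.\ O'Neill, Chapter~10) then gives that beyond the focal point the generator lies in $I^+(S)$. You do not need to reinterpret density blow-up as ``generators meeting''; indeed that reinterpretation is delicate, whereas the focal-point route is immediate here.
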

\begin{proof}  Let $\mathcal C$, $\underline {\mathcal C}$, $\mathcal H$, $\underline {\mathcal H}$, $L$ and $\underline{L}$ as in Example \ref{ex:nullcones}. 
We define the map $T_t(x)=\exp_x(tL(x))$ and the map $\underline{T_t}(x)= \exp_x(t\underline L(x))$ on $S$. Assume now that $(M,g)$ is future null complete. Then it follows that for a  $x\in S$ $\exp_x(tL(x))=\gamma_x(t)$ and $\exp_x(t\underline L(x))=\hat \gamma_x(t)$ are defined on $[0,\infty)$. 

Let $p\in S$ and  define $\mu_0= \frac{1}{\m_{\mathcal H}(B_\delta(p))} \m_{\mathcal H}|_{B_\delta(p)}$ Let $t_p>0$ such that $T_t$ is a diffeomorphism on $B_\delta(p)$ for all $t\in (0,t_p)$ and $\mu_t:= (T_t)_{\#}\mu_0$.

Let $\rho_t$ be the density of $\mu_t$ w.r.t. $\m_{\mathcal H}$.  By the previous proof we have that $t_p\in (0, \frac{1}{\epsilon})$. 
Hence, there is a focal point $\tau$ before $\frac{1}{\epsilon}$. For $t>\frac{1}{\epsilon}\geq \tau$ there exists a time-like geodesic from $S$ to $\gamma_x(t)$ and therefore $\gamma_x(t)\notin \partial J^+(S)$. Similarly we argue for $\hat \gamma_x(t)$. 

It follows that $\partial J^+(S)$ is a closed and bounded, hence compact, subset of $\mathcal C\cup \underline{\mathcal C}$.

From here we can follow the proof of the classical Penrose Singularity Theorem. Let us outline the argument. 

In view of the time-orientability of $(M,g)$, there is a global timelike vector field $T$ whose
integral curves are timelike, foliate $M$ and intersect the Cauchy hypersurface $\Sigma$ exactly
once. Since $J^+(S)$  is
a future set, the  topological boundary $\partial J^+(S)$ is an achronal $(n-1)$-dimensional closed
Lipschitz submanifold without boundary. Hence, every integral curve of $T$ intersects $\partial J^+(S)$ at most once.  The projection $P$ of $\partial J^+(S)$ to the Cauchy Hypersurface $\Sigma$ along the flow lines of $T$ is continuous and bijective.
Since $\partial J^+(S)$ is compact,  $P$ is a homeomorphism. This is contradiction, since $\Sigma$ was assumed to be non-compact. 
\end{proof}
\bibliography{new}
\bibliographystyle{amsalpha}
\end{document}